\newtheorem{theorem}{Theorem}
\newtheorem{proposition}{Proposition}
\newtheorem{definition}{Definition}
\theoremstyle{remark}
\DeclareMathOperator{\re}{Re}
\DeclareMathOperator{\im}{Im}
\DeclareMathOperator{\arctanh}{arctanh}
\DeclareMathOperator{\id}{id}
\DeclareMathOperator{\koebe}{K}
\DeclareMathOperator{\dist}{dist}
\begin{document}
\title[Boundary L\"owner evolution]{L\"owner evolution driven by a stochastic boundary point}
\author{Georgy Ivanov and Alexander Vasil'ev}

\address{ 
\noindent \newline Department of Mathematics\newline
University of Bergen \newline P.O.~Box~7803 \newline Bergen N-5020, Norway \bigskip}

\email{\newline georgy.ivanov@math.uib.no \newline alexander.vasiliev@math.uib.no}

\thanks{The authors have been  supported by the grant of the Norwegian Research Council \#204726/V30, by the NordForsk network `Analysis and Applications', grant \#080151,  by the European Science Foundation Research Networking Programme HCAA, and by Meltzerfondet (University of Bergen). This work was completed while the authors were visiting Mittag-Leffler institute, Sweden in the Fall 2011.}

\subjclass[2010]{Primary 60K35, 30C35; Secondary 60J65}

\keywords{L\"owner equation, Brownian motion, Shape, Attractor, Diffusion}

\begin{abstract}
We consider evolution in the unit disk in which the sample paths
are represented by the trajectories of points evolving randomly under
the generalized Loewner equation. The driving mechanism differs from
the SLE evolution, but nevertheless solutions possess similar invariance properties.
\end{abstract}

\maketitle

\section{Introduction}\label{introd}

The last decade has been marked by a burst of interest in Schramm-L\"owner Evolution (SLE). SLE has led to an elegant description of several 2D conformally invariant statistical physical systems at criticality by means of a solution to the initial value problem for a special differential equation with
a random driving term given by 1D Brownian motion. The origin of SLE can be traced to the seminal Schramm's paper \cite{Schramm}, where he revisited the notion of scaling limit and conformal invariance for the loop erased random walk and the uniform spanning tree. This approach was developed in many
further works and many lattice physical processes were proved to converge to SLE with some specific diffusion factor, e.g., percolation, Ising model \cite{SLE1, SLE2, Smirnov}.  On the other hand, SLE became an approach to conformal field theory which emphasizes CFT's roots in statistical physics \cite{BB, FriedrichWerner}.

We consider another setup in which  the sample paths are represented by the trajectories of  points (e.g., the origin) in the unit disk $\mathbb D$ evolving randomly under the generalized L\"owner equation. The driving mechanism differs from SLE. In the SLE case the Denjoy-Wolff attracting point ($\infty$ in the chordal case or a boundary point of the unit disk in the radial case) is fixed. In our case, the attracting point is the driving mechanism and the Denjoy-Wolff point is different from it. In analytic terms, the generalized L\"owner  evolution $\{\phi_t\}_{t\geq 0}$ solves the initial value problem
\begin{equation}\label{LEV}
 \begin{cases}
  \frac{d}{dt}\phi_t(z) = (\phi_t(z)-\tau(t))(\overline{\tau(t)} \phi_t(z) - 1)\, p(\phi_t(z),t),\\
\phi_0(z) = z,
 \end{cases} \quad z\in \mathbb{D},\quad t\geq 0,
\end{equation}
where $\tau: [0,+\infty) \to \overline{\mathbb{D}}$ is measurable and the driving function $p(z,t)$ is a so-called Herglotz function (see Section \ref{sec:genloew} for the definition). 

In the radial SLE case $\tau\equiv 0$ and 
\[
p(z,t)=\frac{e^{ikB_t}+z}{e^{ikB_t}-z}, \quad\text{where $B_t$ is Brownian motion.}
\]
In our case $\tau(t)=e^{ikB_t}$, and the Denjoy-Wolff point is different from $\tau$, and there is no common fixed point of the evolution.

Even though the driving mechanism in our case differs from that of SLE, the generated families of conformal maps still possess the important  time-homogeneous Markov property.

In the deterministic case, Section 3, we thoroughly study a particular case when $\tau(t)=e^{ikt}$, $k\in\mathbb R$. We give a complete description of the case of the evolution of M\"obius automorphisms of $\mathbb D$. In particular, we obtain the intervals of the constant $k$ in which the evolution is elliptic, hyperbolic and parabolic. In the general case of endomorphisms of $\mathbb D$ we prove a sufficient condition of ellipticity of the dynamics for sufficient large $k$. We also find values of the parameter $k$ for which the trajectories of a point are closed.

The proposed model also describes deterministic and stochastic evolution of shapes in the complex plane.
Following \cite{SharonMumford}, we understand  by a {\it shape}  a  simple closed smooth curve. The study of 2D shapes is one of the central problems in the field of computer vision. A program of such study and its importance was summarized by Mumford at ICM 2002 in Beijing \cite{Mumford}. However, we plan to focus on shape evolution in forthcoming papers.

The central part of the paper, Section 4, is dedicated to a stochastic version of \eqref{LEV} in which the driving term is $\tau(t)=e^{ikB_t}$, where $B_{t}$ is the standard 1-dimensional Brownian motion. The equation \eqref{LEV} is reduced to an It{\^o} 2D diffusion SDE,  which we solve explicitly
in some cases. We calculate the infinitesimal generator which is given by the  Virasoro generators in a certain representation.  We give estimates of growth of $|\phi_t(z)|$, calculate momenta, and describe the boundary diffusion on $\mathbb T$.

So far, the only well-studied examples of the generalized L\"owner equation were the classical cases corresponding to $\tau(t) \equiv 0$ (radial L\"owner equation) and $\tau(t) \equiv 1$ (chordal L\"owner equation). This paper is one of the first studies where examples of generalized L\"owner equation with non-constant attracting point $\tau(t)$ are considered.

\medskip
\noindent
{\bf Acknowledgments.} The authors would like to thank Filippo Bracci and  David Shoikhet for inspiring discussions. The authors are especially grateful to Alan Sola for reading the original version of the paper and for his important critical remarks.

\section{Generalized L\"owner evolution}
\label{sec:genloew}

The pioneering idea of L\"owner \cite{Loewner} in 1923 contained two main ingredients: subordination chains and semigroups of conformal maps. This far-reaching program was created in the hopes to solve
the Bieberbach conjecture \cite{Bieb} and the final proof of this conjecture by de~Branges \cite{Branges} in 1984 was based on L\"owner's  parametric method.
The modern form of this method is due to Kufarev \cite{Kufarev} and Pommerenke \cite{Pommerenke1, Pommerenke2}.  We concentrate our attention on 
a generalization of the L\"owner-Kufarev approach by  F.~Bracci, M.~D. Contreras, S.~D{\'i}az-Madrigal, and P.~Gumenyuk  \cite{Bracci, gumenyuk10}. One of the main results of the generalized L\"owner theory is an essentially one-to-one correspondence between  evolution families, Herglotz vector fields and (generalized) L\"owner chains. Below we briefly describe the basic notions of the theory.

\subsection{Preliminaries: Semigroups and fixed points of holomorphic functions}

By the Schwarz-Pick lemma, every holomorphic self-map $\varphi$ of the unit disk $\mathbb{D}$ may have at most one fixed point $\tau$ in $\mathbb{D}.$ If such a point $\tau$ exists, then the point is called \emph{the Denjoy-Wolff point} of $\varphi,$ and in this case the function $\varphi$ is said to be an elliptic self-map of $\mathbb{D}.$ 

Otherwise, there exists a point $\tau$ on the unit circle $ \mathbb{T},$ such that the angular limit $\angle \lim_{z\to \tau} \varphi(z) = \tau$ (this statement is known as the Denjoy-Wolff theorem).  The point $\tau$ is again called \emph{the Denjoy-Wolff point} of $\varphi$. The limit $\angle \lim_{z\to\tau}\varphi'(z) = \alpha_f$ always exists, its value $\alpha_\varphi$ belongs to $(0,1],$ and the map $\varphi$ in this case is said to be either \emph{hyperbolic} (if $\alpha_\varphi < 1$) or \emph{parabolic} (if $\alpha_\varphi=1$) (for details and proofs see, e.~g., \cite{Abate, ShoikhetSeimgroupsGFT}).

A family $\{\psi_t\}_{t\geq 0}$ of holomorphic self-maps of the the unit disk $\mathbb{D}$ is called a one-parameter continuous semigroup if 
\begin{enumerate}
 \item $\psi_0 = \id_{\mathbb{D}},$
\item $\psi_{t+s} = \psi_t \circ \psi_s,$ for $s,t \geq 0,$
\item $\lim_{t\to s} \psi_t(z) = \psi_s(z),$ for all $s\geq 0$ and $z\in \mathbb{D},$
\item $\lim_{t\to 0^{+}} \psi_t(z) = z$ locally uniformly in $\mathbb{D}.$
\end{enumerate}

For every semigroup $\{\psi_t(z)\}_{t\geq 0}$ there exists a unique holomorphic function $f:\mathbb{D} \to\mathbb{C},$ such that the semigroup $\{\psi_t(z)\}_{t\geq 0}$ is the unique solution to the initial-value problem
\begin{equation}
 \begin{cases}
  \frac{d}{dt}\psi_t(z) = f(\psi_t(z)),\\
\psi_0(z) = z.
 \end{cases}
\end{equation}
The function $f$ is called the infinitesimal generator of the semigroup $\{\psi_t \}_{t\geq 0}$.

A holomorphic function $f$ is an infinitesimal generator of a semigroup if and only if there exists $\tau\in \overline{\mathbb{D}}$  and a holomorphic function $p:\mathbb{D} \to \mathbb{C}$ with $\re p \geq 0,$ such that
\[
 f(z) = (z- \tau)(\bar{\tau} z - 1) p(z),\quad z\in\mathbb{D}.
\]
This representation is unique (if $f \not\equiv 0$), and is known as the Berkson-Porta representation of $f$.  The point $\tau$ turns out to be the Denjoy-Wolff point of all functions in $\{\psi_t(z)\}_{t\geq 0}.$ We will use the term \emph{Herglotz function} for the function $p(z)$.

We say that a semigroup $\psi_t(z)$ is elliptic, hyperbolic or parabolic if it consists of elliptic, hyperbolic or parabolic self-maps of $\mathbb{D},$ respectively.

\subsection{Evolution families and Herglotz vector fields}
We start with the notion of \emph{evolution family}.
\begin{definition}\label{evolution}
An evolution family of order $d\in[1,+\infty]$ is a two-parameter family \linebreak $\{\phi_{s,t}\}_{0\leq s\leq t<+\infty}$ of holomorphic self-maps of the unit disk, such that the following three conditions are satisfied.
\begin{itemize}
 \item $\phi_{s,s} = id_{\mathbb{D}}$;
 \item $\phi_{s,t} = \phi_{u,t} \circ \phi_{s,u}$ for all $0 \leq s \leq u \leq t <+\infty$;
 \item for any $z \in \mathbb{D}$ and $T > 0$ there is a non-negative function $k_{z,T} \in L^d([0,T],\mathbb{R}),$ such that
\[
 |\phi_{s,u}(z) - \phi_{s,t}(z)| \leq \int_u^t k_{z,T}(\xi) d\xi, \quad z\in \mathbb{D}
\]
for all $0 \leq s \leq u \leq t \leq T.$
\end{itemize}
\end{definition}

An infinitesimal description of an evolution family is given in terms of a {\it Herglotz vector field}.

\begin{definition}
 A (generalized) Herglotz vector field of order $d$ is a function $G:\mathbb{D} \times [0,+\infty) \to \mathbb{C}$  satisfying the following conditions:
\begin{itemize}
 \item the  function $[0, +\infty) \ni t \mapsto G(z,t)$ is measurable for every $z\in \mathbb{D}$;
\item the function $z \mapsto G(z,t)$ is holomorphic in the unit disk for  $t \in [0,+\infty)$;
\item for any compact set $K \subset \mathbb{D}$ and for every  $T>0$ there exists a non-negative function $k_{K,T} \in L^d([0,T],\mathbb{R}),$ such that
\[
 |G(z,t)|\leq k_{K,T}(t)
\]
for all $z\in K,$ and for  almost every $t \in [0,T]$;
\item for almost every $t\in[0,+\infty)$ the vector field $G(\cdot, t)$ is semicomplete.
\end{itemize}
\end{definition}
By semicompleteness we mean that the solution to the problem
\[
 \begin{cases}
 \displaystyle \frac{dx(l)}{d l} = G(x(l),t),\\
x(s)=z
 \end{cases}
\]
is defined for all times $l\in[s,+\infty),$ for any fixed $s\geq 0,$ fixed $t \geq 0$ and fixed $z\in \mathbb{D}.$

An important result of general L\"owner theory is the fact that the evolution families can be put into a one-to-one correspondence with the Herglotz vector fields by means of the so-called generalized L\"owner ODE. This can be formulated as the following theorem.
\begin{theorem}[{\cite[Theorem 1.1]{Bracci}}]
\label{thm:LoewnerEq}
For any evolution family $\{\phi_{s,t}\}$ of order $d \geq 1$ in the unit disk there exists an essentially unique Herglotz vector field $G(z,t)$ of order $d,$ such that for all $z\in \mathbb{D}$ and for almost all $t\in[0,+\infty)$
\begin{equation}
\label{eq:evolution}
 \frac{\partial \phi_{s,t}(z)}{\partial t} = G(\phi_{s,t}(z),t).
\end{equation}
Conversely, for any Herglotz vector field $G(z,t)$ of order $d\geq 1$ in the unit disk there exists a unique evolution family of order $d$, such that the equation above is satisfied.
\end{theorem}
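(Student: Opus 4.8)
The plan is to prove the two assertions separately, since they require genuinely different machinery: the direct part is a differentiation-and-regularity problem, while the converse is an existence--uniqueness theorem for non-autonomous ODEs. For the direct statement, I am given an evolution family $\{\phi_{s,t}\}$ and must produce $G$ as a kind of diagonal time derivative. First I would extract regularity from the axioms of Definition \ref{evolution}: the integral estimate says $t\mapsto\phi_{s,t}(z)$ is locally absolutely continuous with a common $L^d$ majorant $k_{z,T}$, and the algebraic law $\phi_{s,t}=\phi_{u,t}\circ\phi_{s,u}$ together with the Schwarz--Pick lemma (each $\phi_{s,t}$ is a self-map of $\mathbb D$, hence a contraction of the hyperbolic metric) lets me compare difference quotients across different parameter pairs. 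The goal is to show that for almost every $t$ the right-hand limit
\[
 G(z,t) := \lim_{h\to 0^+}\frac{\phi_{t,t+h}(z)-z}{h}
\]
exists, with the exceptional null set of $t$'s independent of $z$.

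Granting that, I would verify the four defining properties of a Herglotz vector field. Holomorphy of $z\mapsto G(z,t)$ follows because $G(\cdot,t)$ is a locally uniform limit of holomorphic difference quotients (Weierstrass); measurability in $t$ and the bound $|G(z,t)|\le k_{K,T}(t)$ on compacts are inherited from the majorant of the family; and semicompleteness is automatic because $\phi_{s,\cdot}(z)$ already furnishes a global-in-time trajectory staying in $\mathbb D$. To obtain the L\"owner ODE \eqref{eq:evolution} I would differentiate the cocycle identity $\phi_{s,t+h}=\phi_{t,t+h}\circ\phi_{s,t}$ in $h$ at $h=0^+$: the difference quotient of the right-hand side is $\bigl(\phi_{t,t+h}(w)-w\bigr)/h$ evaluated at $w=\phi_{s,t}(z)$, whose limit is precisely $G(\phi_{s,t}(z),t)$. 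Essential uniqueness of $G$ is then immediate, since any admissible $G$ must coincide with this diagonal derivative for almost every $t$.

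For the converse, given a Herglotz vector field $G$, I would construct $\phi_{s,t}$ by solving, for each fixed $s\ge 0$ and $z\in\mathbb D$, the Carath\'eodory initial value problem $\partial_t x(t)=G(x(t),t)$, $x(s)=z$. The hypotheses on $G$ (measurable in $t$, holomorphic hence locally Lipschitz in $z$, with an $L^d$-bound on compacts) are exactly the Carath\'eodory conditions, yielding local existence and uniqueness, while semicompleteness of $G(\cdot,t)$ upgrades this to a solution defined for all $t\ge s$ with values in $\mathbb D$; I set $\phi_{s,t}(z)$ equal to this solution. Holomorphy of $z\mapsto\phi_{s,t}(z)$ I would obtain from holomorphic dependence on initial data, concretely by applying Vitali's theorem to the uniformly bounded ($\mathbb D$-valued) family of solutions. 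The identities $\phi_{s,s}=\id_{\mathbb D}$ and $\phi_{s,t}=\phi_{u,t}\circ\phi_{s,u}$ follow from uniqueness, since both sides solve the same IVP with data prescribed at $u$, and the required $L^d$-continuity estimate transfers directly from $|\phi_{s,u}(z)-\phi_{s,t}(z)|=\bigl|\int_u^t G(\phi_{s,\xi}(z),\xi)\,d\xi\bigr|\le\int_u^t k_{K,T}(\xi)\,d\xi$. Uniqueness of the resulting evolution family is again a consequence of uniqueness for the ODE.

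The main obstacle I expect lies in the direct part: showing that the diagonal limit defining $G(z,t)$ exists for a single null set of $t$ valid for all $z$ simultaneously, and that the convergence is locally uniform in $z$ so that holomorphy and the Herglotz majorant survive the passage to the limit. Moving from the $L^d$ absolute-continuity estimate to a genuine pointwise-in-$t$ derivative, and exploiting the Schwarz--Pick contraction and the semigroup law to make the difference quotients uniformly comparable across different $s,t$, is the technical heart of the argument. The ODE bookkeeping in both directions is comparatively routine; it is this uniform extraction of the generator that will require the most care.
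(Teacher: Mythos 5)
The paper does not prove this statement at all: it is quoted verbatim as Theorem~1.1 of Bracci--Contreras--D{\'i}az-Madrigal \cite{Bracci} and used as a black box, so there is no internal proof to compare your attempt against. Judged on its own terms, your outline follows the strategy of that reference. The converse direction as you describe it is essentially complete: Carath\'eodory existence--uniqueness under the $L^d$ majorant, semicompleteness to rule out finite-time escape from $\mathbb{D}$, Montel/Vitali for holomorphic dependence on the initial point, and uniqueness of solutions to recover $\phi_{s,s}=\id$ and the cocycle identity. That half is routine and you have all the pieces.

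The direct direction, however, contains a genuine gap at exactly the point you flag, and the specific definition you propose makes it worse rather than better. Setting $G(z,t):=\lim_{h\to0^+}\bigl(\phi_{t,t+h}(z)-z\bigr)/h$ requires differentiating $u\mapsto\phi_{t,u}(z)$ from the right at the single point $u=t$; absolute continuity of $u\mapsto\phi_{s,u}(z)$ for each \emph{fixed} $s$ only gives a.e.\ differentiability in $u$ with a null set depending on $s$ and $z$, and says nothing about the endpoint $u=s$. The way around this (and the route taken in \cite{Bracci}) is to fix $s<t$ and use the cocycle to write $\bigl(\phi_{s,t+h}(z)-\phi_{s,t}(z)\bigr)/h=\bigl(\phi_{t,t+h}(w)-w\bigr)/h$ with $w=\phi_{s,t}(z)$: the left side converges for a.e.\ $t$, which yields the diagonal limit only at points $w$ lying in the image $\phi_{s,t}(\mathbb{D})$, not on all of $\mathbb{D}$. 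One must then run this over a countable dense set of pairs $(s,z)$, use the uniform $L^d$ majorant plus Montel and Vitali to extract a single null set and a holomorphic limit on all of $\mathbb{D}$, and only then verify the Herglotz axioms and essential uniqueness (via the identity theorem, since two candidate fields agree on an open subset of $\mathbb{D}$ for a.e.\ $t$). You correctly identify this as the technical heart, but identifying the obstacle is not the same as overcoming it; as written, the existence of $G$ with a $z$-independent exceptional set is asserted, not proved.
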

Essential uniqueness in the theorem above means that for any other Herglotz vector field $H(z,t)$ satisfying \eqref{eq:evolution}, the equality $H(z,t) = G(z,t)$ holds for all $z\in\mathbb{D}$ and almost all $t\in[0,+\infty)$.

Herglotz vector fields admit a convenient Berkson-Porta-like representation using so-called Herglotz functions.

\begin{definition}
 A Herglotz function of order $d\in [1,+\infty)$ is a function ${p:\mathbb{D} \times [0,+\infty) \to \mathbb{C}},$ such that
\begin{itemize}
 \item the function $t\mapsto p(z,t)$  belongs to $L^d_{\mathrm{loc}}([0,+\infty),\mathbb{C})$ for all $z\in\mathbb{D}$;
\item the function $z \mapsto p(z,t)$ is holomorphic in $\mathbb{D}$ for every fixed $t\in[0,+\infty)$;
\item $\re p(z,t) \geq 0$ for all $z\in \mathbb{D}$ and for all $t \in[0,+\infty)$. 
\end{itemize}
\end{definition}

Now, the representation of Herglotz vector fields is given in
 the following theorem.
\begin{theorem}[{\cite[Theorem 1.2]{Bracci}}]
Given a Herglotz vector field of order $d \geq 1$ in the unit disk, there exists an essentially unique (i.~e., defined uniquely for almost all $t$ for which $G(\cdot, t)\neq 0$) measurable function $\tau: [0,+\infty) \to \overline{\mathbb{D}}$ and a Herglotz function $p(z,t)$ of order $d,$ such that for all $z\in \mathbb{D}$ and almost all $t\in[0,+\infty)$
\begin{equation}
\label{eq:genLoewner}
 G(z,t) = (z-\tau(t))(\overline{\tau(t)}z-1)p(z,t).
\end{equation}
Conversely, given a measurable function $\tau:[0,+\infty) \to \overline{\mathbb{D}}$ and a Herglotz function $p(z,t)$ of order $d\geq 1,$ the vector field defined by the formula above is a Herglotz vector field of order $d$.
\end{theorem}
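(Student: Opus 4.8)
The plan is to reduce everything to the static Berkson--Porta representation recalled in Section~\ref{sec:genloew} for infinitesimal generators of semigroups, apply it for each frozen time $t$, and then upgrade the resulting pointwise-in-$t$ decomposition to one that is jointly measurable and of the correct integrability order. I would treat the two implications separately, beginning with the easier converse.

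For the converse, suppose $\tau:[0,+\infty)\to\overline{\mathbb D}$ is measurable and $p$ is a Herglotz function of order $d$, and set $G(z,t)=(z-\tau(t))(\overline{\tau(t)}z-1)p(z,t)$. Holomorphy of $z\mapsto G(z,t)$ is immediate. Measurability of $t\mapsto G(z,t)$ holds because $\tau(t)$, $\overline{\tau(t)}$ and $|\tau(t)|^2$ are measurable while $t\mapsto p(z,t)$ is measurable, so $G(z,\cdot)$ is assembled from measurable functions by multiplication and addition. For the local bound, on a compact $K\subset\mathbb D$ the factor $(z-\tau(t))(\overline{\tau(t)}z-1)$ is bounded by a constant $C_K$ uniformly in $t$ (since $|\tau(t)|\le 1$), so $|G(z,t)|\le C_K\,k_{K,T}(t)$ with $k_{K,T}$ the majorant supplied by $p$; hence $G$ has order $d$. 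Finally, for every fixed $t$ the map $z\mapsto G(z,t)$ is exactly in Berkson--Porta form with $\re p(\cdot,t)\ge 0$, hence is an infinitesimal generator of a one-parameter semigroup and therefore semicomplete. This verifies all four axioms, so $G$ is a Herglotz vector field of order $d$.

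For the direct implication, fix a Herglotz vector field $G$. For almost every $t$ the slice $z\mapsto G(z,t)$ is a semicomplete holomorphic vector field, i.e.\ an infinitesimal generator, so the static Berkson--Porta theorem yields a unique $\tau(t)\in\overline{\mathbb D}$ and a holomorphic $p(\cdot,t)$ with $\re p(\cdot,t)\ge 0$ satisfying \eqref{eq:genLoewner}; on the measurable set where $G(\cdot,t)\equiv 0$ we set $\tau(t)=0$. The factor $p(z,t)=G(z,t)/[(z-\tau(t))(\overline{\tau(t)}z-1)]$ is then forced, inherits holomorphy in $z$ and $\re\ge 0$ from Berkson--Porta, and essential uniqueness of $(\tau,p)$ follows from uniqueness of the static representation wherever $G\neq 0$. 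It remains to check that $\tau$ is measurable and that $p$ is a Herglotz function of order $d$.

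The crux, and the step I expect to be the main obstacle, is the measurability of $\tau$: the pointwise construction produces $\tau(t)$ for almost every $t$ but says nothing a priori about its dependence on $t$. Since $t\mapsto G(z,t)$ is measurable for each $z$ by definition and $\mathrm{Hol}(\mathbb D)$, with the topology of locally uniform convergence, is separable and metrizable, the slice map $t\mapsto G(\cdot,t)$ is Borel measurable into $\mathrm{Hol}(\mathbb D)$. It then suffices to know that the Berkson--Porta assignment $f\mapsto\tau$ is Borel measurable on the set of nonzero generators, the Denjoy--Wolff point depending Borel-measurably on the generator; composing gives measurability of $\tau$. Equivalently, one may apply the Kuratowski--Ryll-Nardzewski selection theorem to the almost everywhere single-valued set-valued map $t\mapsto\{\tau(t)\}$. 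Once $\tau$ is measurable, $p(z,t)$ is a measurable quotient in $t$, and its local $L^d$ bound is inherited from that of $G$ through Borel--Carath\'eodory and Harnack-type estimates for functions with $\re\ge 0$; the one delicate point is the uniform handling on a compact set of the moving zeros $\tau(t)$ and $1/\overline{\tau(t)}$ of the quadratic factor, which I would address by a covering argument. This shows that $p$ is a Herglotz function of order $d$ and completes the proof.
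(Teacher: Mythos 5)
The paper itself gives no proof of this statement: it is quoted verbatim from Bracci--Contreras--D\'iaz-Madrigal (their Theorem 1.2), so there is nothing internal to compare your argument against. Judged on its own, your outline follows the natural (and, in essence, the cited authors') strategy: freeze $t$, apply the static Berkson--Porta representation to the semicomplete slice $G(\cdot,t)$, and then upgrade to joint measurability and order $d$. The converse direction and the essential-uniqueness claim are fine in substance, with one small caveat: the definition of a Herglotz function of order $d$ only gives $t\mapsto p(z,t)\in L^d_{\mathrm{loc}}$ for each \emph{fixed} $z$, not a majorant uniform over a compact $K$, so the bound $|G(z,t)|\le C_K\,k_{K,T}(t)$ already requires the Harnack-type estimate $|p(z,t)|\le C_K'(|p(0,t)|+1)\cdot|p(0,t)|$-style control that you only invoke later, in the forward direction.

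The genuine gap is exactly where you predicted it: the measurability of $\tau$. You reduce it to the claim that the Berkson--Porta assignment $f\mapsto\tau$ is Borel on the set of nonzero generators, but you never establish that claim, and it is the entire technical content of the forward implication (the map is not continuous: elliptic generators can degenerate so that the interior zero escapes to $\mathbb{T}$, and one must show it is still, say, a pointwise limit of continuous maps such as $f\mapsto\psi^f_n(0)$, which in turn requires continuous dependence of the semigroup flow on the generator in the compact-open topology). The proposed fallback via the Kuratowski--Ryll-Nardzewski selection theorem does not repair this: for the single-valued multifunction $t\mapsto\{\tau(t)\}$, applying any selection theorem presupposes measurability of the multifunction, i.e.\ that $\{t:\tau(t)\in U\}$ is measurable for open $U$ --- which is precisely the statement to be proved. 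So the argument as written is circular at its crux, and the proof is incomplete until the Borel measurability of $f\mapsto\tau$ (or an equivalent approximation of $\tau(t)$ by manifestly measurable functions of $t$) is actually supplied.
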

According to Theorem~\ref{thm:LoewnerEq}, to every evolution family $\{\phi_{s,t}\}$ one can associate an essentially unique Herglotz vector field $G(z,t)$. The pair of functions $(p,\tau)$ representing the vector field $G(z,t)$ is called the {\it Berkson-Porta data} of the evolution family $\{\phi_{s,t}\}.$

Let $\{\phi_{s,t}\}$ be an evolution family with Berkson-Porta data $(p,\tau)$.
In the simplest case when neither $p$, nor $\tau$ change in time (i.~e., the corresponding Herglotz vector field $G(z,t)$ is time-independent), $\tau$ turns out to be precisely the Denjoy-Wolff point of every self-map in the family $\{\phi_{s,t}\}.$ Moreover, for any ${0 \leq s < +\infty},$ we have that $\phi_{s,t}(z) \to \tau $ uniformly on compacts subsets of $\mathbb{D}$, as $t\to +\infty$. For this reason, we call $\tau$ the {\it attracting point} of the evolution family $\{\phi_{s,t}\}.$

In the case when the Herglotz field $G(z,t)$ is time-dependent, the meaning of $\tau$ is explained in the following theorem.

\begin{theorem}[{\cite[Theorem 6.7]{Bracci}}]
 Let  $\{\phi_{s,t}\}$ be an evolution family of order $d\geq 1$ in the unit disk, and let $G(z,t) = (z- \tau(t))(\overline{\tau(t)}z-1)p(z,t)$ be the corresponding Herglotz vector field. Then for almost every $s \in[0,+\infty)$, such that  $G(z,s)\neq 0,$ there exists a decreasing sequence $\{t_n(s)\}$ converging to $s,$ such that $\phi_{s,t_n(s)} \neq id_{\mathbb{D}}$ and
\[
 \tau(s) = \lim_{n \to \infty}\tau(s,n),
\]
where $\tau(s,n)$ denotes the Denjoy-Wolff point of $\phi_{s,t_n(s)}.$
\end{theorem}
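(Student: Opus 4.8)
The plan is to reduce the statement to a first-order (in $t-s$) analysis of the evolution family near $t=s$, and then to a general fact about the behaviour of Denjoy--Wolff points under convergence to the identity.

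First I would extract the infinitesimal data at time $s$. Integrating the Loewner equation \eqref{eq:evolution} gives
\[
 \phi_{s,t}(z) - z = \int_s^t G(\phi_{s,\xi}(z),\xi)\,d\xi ,
\]
and I would show that for almost every $s$ one has
\[
 \lim_{t\to s^+} \frac{\phi_{s,t}(z)-z}{t-s} = G(z,s)
\]
locally uniformly in $z\in\mathbb D$. Splitting the averaged integrand as $[G(\phi_{s,\xi}(z),\xi)-G(z,\xi)]+G(z,\xi)$, the average of the second term tends to $G(z,s)$ at every Lebesgue point of $\xi\mapsto G(z,\xi)$, which has full measure since this map is $L^d_{\mathrm{loc}}$. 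The first term is controlled by a Cauchy estimate $|G(w,\xi)-G(z,\xi)|\le C\,k_{K,T}(\xi)\,|w-z|$ on compacta, combined with $|\phi_{s,\xi}(z)-z|\le\int_s^\xi k_{z,T}\to0$; the order-$d$ domination makes this $o(1)$ for a.e.\ $s$. In particular, when $G(\cdot,s)\not\equiv0$ this forces $\phi_{s,t}\neq\mathrm{id}_{\mathbb D}$ for all $t>s$ close enough to $s$, so one may select any sequence $t_n\downarrow s$ with $\phi_{s,t_n}\neq\mathrm{id}_{\mathbb D}$.

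The theorem then follows from the following lemma, applied with $f_n=\phi_{s,t_n}$, $c_n=t_n-s$ and $F=G(\cdot,s)$: \emph{if holomorphic self-maps $f_n\neq\mathrm{id}_{\mathbb D}$ converge to $\mathrm{id}_{\mathbb D}$ with $(f_n-\mathrm{id}_{\mathbb D})/c_n\to F$ locally uniformly for some $c_n\downarrow0$ and some infinitesimal generator $F\not\equiv0$, then the Denjoy--Wolff points of $f_n$ converge to the Berkson--Porta point $\tau_F$ of $F$.} If $\tau_F\in\mathbb D$, then $F(\tau_F)=0$ and, as $F\not\equiv0$, this is an isolated zero; by Hurwitz's theorem the maps $(f_n-\mathrm{id}_{\mathbb D})/c_n$ have zeros tending to $\tau_F$, and these are interior fixed points of $f_n$, hence equal to its unique interior Denjoy--Wolff point. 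This settles the interior case.

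The boundary case $\tau_F\in\mathbb T$ is the main obstacle, precisely because Denjoy--Wolff points are not continuous functions of the map in general. Here $F$ has no zero in $\mathbb D$ and the Berkson--Porta form simplifies to $F(z)=\overline{\tau_F}\,(z-\tau_F)^2\,p(z)$ with $\re p\ge0$, which encodes that the autonomous flow of $F$ preserves every horocycle at $\tau_F$ (Wolff's lemma at the infinitesimal level). The interior analysis already shows that, since $F\neq0$ on each compact subset of $\mathbb D$, all fixed points of $f_n$ eventually leave every compact subset, so the Denjoy--Wolff points $\sigma_n$ (interior or on $\mathbb T$) tend to $\mathbb T$; it remains to identify their limit. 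I would do this quantitatively: writing $f_n=\mathrm{id}_{\mathbb D}+c_nF+o(c_n)$ and invoking Julia's inequality, show that for each fixed horocycle $E(\tau_F,R)$ the image $f_n\big(E(\tau_F,R)\big)$ is contained in a horocycle whose vertex is forced toward $\tau_F$, so that any subsequential limit $\sigma$ of $\sigma_n$ is a non-repelling boundary fixed point of the flow of $F$, whence $\sigma=\tau_F$ by uniqueness of the Berkson--Porta point. The delicate step, requiring the most care, is the \emph{uniform} control of these horocycle images near the boundary, where $p$ may be unbounded; this is exactly where the Herglotz condition $\re p\ge0$ is indispensable.
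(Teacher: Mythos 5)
A preliminary remark: the paper contains no proof of this statement — it is quoted verbatim from \cite[Theorem 6.7]{Bracci}, so there is no in-paper argument to compare yours against, and I can only assess your proposal on its own terms. Your overall route is the natural one: differentiate the evolution family at $t=s^{+}$ to get $(\phi_{s,t}-\id_{\mathbb{D}})/(t-s)\to G(\cdot,s)$ locally uniformly for a.e.\ $s$ (the Lebesgue-point/Cauchy-estimate argument you sketch is correct and is how this is done), and then prove a convergence lemma for Denjoy--Wolff points of maps of the form $\id_{\mathbb{D}}+c_nF+o(c_n)$. The differentiation step, the selection of $t_n$, and the interior case $\tau_F\in\mathbb{D}$ (Hurwitz plus uniqueness of an interior fixed point) are complete and correct.

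The genuine gap is the boundary case, which is the heart of the lemma, and your plan for it points in a direction that is hard to execute. You propose to show that $f_n$ approximately preserves horocycles \emph{at $\tau_F$}, and you correctly observe that the expansion $f_n=\id_{\mathbb{D}}+c_nF+o(c_n)$ is only locally uniform on compacta, so no boundary-uniform control of $f_n$ near $\mathbb{T}$ is available; as written, your last paragraph records this difficulty rather than resolving it. But no such control is needed: the workable argument runs in the opposite direction, using the \emph{exact} invariance that $f_n$ enjoys at its own Denjoy--Wolff point $\sigma_n$. Set $u_\sigma(z)=|1-\bar\sigma z|^2/(1-|z|^2)$; then Julia's lemma (if $\sigma_n\in\mathbb{T}$) or Schwarz--Pick (if $\sigma_n\in\mathbb{D}$) gives $u_{\sigma_n}(f_n(z))\le u_{\sigma_n}(z)$ for every fixed $z\in\mathbb{D}$ and every $n$, with no asymptotic expansion involved. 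Pass to a subsequence with $\sigma_n\to\sigma$ (your Hurwitz argument already forces $|\sigma|=1$), divide this inequality by $c_n$, and let $n\to\infty$ using $f_n(z)=z+c_nF(z)+o(c_n)$ at the single fixed point $z$ together with the joint smoothness of $(\sigma,z)\mapsto u_\sigma(z)$ for $z$ in compacta of $\mathbb{D}$. This yields $2\re\bigl(\partial_z u_\sigma(z)\,F(z)\bigr)\le 0$ for all $z\in\mathbb{D}$, i.e.\ the infinitesimal Julia inequality for $F$ at $\sigma$, which says that $\sigma$ is a non-repelling boundary fixed point of the semigroup generated by $F$ and hence, by uniqueness of the Denjoy--Wolff point among such, that $\sigma=\tau_F$. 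Until this (or an equivalent) computation is actually carried out — including the interior-$\sigma_n$ subcase, which your sketch does not address — the boundary case, and with it the theorem, is not proved.
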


Note, however, that it is \textbf{not} true in general that $\tau(t)$ is the Denjoy-Wolff point of the map $\phi_{s,t}(z)$ (see, for instance, Example 1 in Section \ref{sec:deterministic}).

\section{Deterministic case}
\label{sec:deterministic}
We start by considering a simpler deterministic setting  $\tau(t) = e^{ikt}$. In the next Section~\ref{sec:stochastic} we switch to the stochastic setting and let $\tau(t)$ move as a Brownian particle on the unit circle $\mathbb{T}$, i.e.,  $\tau(t)= e^{ikB_t},$ where $k\geq 0,$ and $B_t$ is a 1-dimensional standard Brownian motion.

Due to the definition of an evolution family (Definition~\ref{evolution}), $\phi_{s,t} = \phi_t \circ \phi^{-1}_s,$ and one may restrict attention to the one-parameter family of maps $\{\phi_t\}_{t\geq 0} := \{\phi_{0,t}\}_{t \geq 0},$ rather than consider the two-parameter evolution family.  So we are going to analyze the following initial value problem
\begin{equation}
\label{eq:deterministic}
 \begin{cases}
  \frac{d}{dt}\phi_t(z) = \frac{(\tau(t) - \phi_t(z))^2}{\tau(t)}\, p(\phi_t(z),t),\\
\phi_0(z) = z,
 \end{cases} \quad \textrm{where } \tau(t) = e^{ikt},\quad z\in \mathbb{D},\quad t\geq 0.
\end{equation}

If the Herglotz function $p(z,t)$ can be written in the form $p(z,t) = \tilde{p}(z/\tau(t)),$ where  $\tilde{p}:\mathbb{D} \to \mathbb{C}$ is a holomorphic function with non-negative real part, then the problem (\ref{eq:deterministic}) is simplified significantly. Indeed, the change of variables
\[
 \psi_t(z)=\frac{\phi_t(z)}{\tau(t)}
\]
leads to the following initial value problem for a separable differential equation
\begin{equation}
\label{eq:autonomLoewner}
\begin{cases}
 \frac{d}{dt}\psi_t(z) = (\psi_t(z)-1)^2 \tilde{p}(\psi_t(z)) - ik\psi_t(z),\\
\psi_0(z)=z,
\end{cases}
\end{equation}
so that  the solution to (\ref{eq:deterministic}) can be written in quadratures.

Observe that the vector field $f(z) = (z-1)^2 \tilde{p}(z) - ikz$ in the right-hand side of (\ref{eq:autonomLoewner}) is an infinitesimal generator, and (\ref{eq:autonomLoewner})  is, in fact, an autonomous L\"owner equation. It generates a semigroup $\{\psi_t\}_{t\geq 0}$ of self-maps of the unit disk. We denote the Berkson-Porta data associated with this semigroup by $(p_0(z),\tau_0)$.

If  $\tau_0$ lies inside $\mathbb{D},$  then $f(\tau_0) = 0.$ Otherwise, if $\tau_0$ is on the circle $\mathbb{T},$ then  we have $\angle \lim_{z \to \tau_0} f(z) = 0.$  Hence, the corresponding Herglotz function in the Berkson-Porta decomposition of $f$ is  given by
\[
 p_0(z) = \frac{(z-1)^2 \tilde{p}(z) -ikz}{(z-\tau_0)(\overline{\tau_0} z -1)}.
\]

\subsubsection*{Example 1} The simplest  example is given by the Herglotz function $\tilde{p}(w)=\frac{1}{1-w}$, which corresponds to $p(z,t) = \frac{\tau(t)}{\tau(t)-z}$. The L\"owner ODE becomes linear
\[
 \frac{d}{dt} \phi_t (z) = \tau - \phi_t(z),
\]
and the solution to the problem (\ref{eq:deterministic}) is given by
\[
 \phi_t(z) = e^{-t} z + \frac{e^{ikt}-e^{-t}}{1+ik}.
\]

Let $\mathrm{DW}_{\phi}(t)$ denote the Denjoy-Wolff point of the map $\phi_t.$ Solving the equation $\phi_t(z) = z,$ we easily get that in this example
\[
 \mathrm{DW}_{\phi}(t) = \frac{-1+e^{t+i k t}}{(1+i k) \left(e^t-1\right)}.
\]
This may be regarded as a simple example showing that in general $\tau(t) \neq \mathrm{DW}(t).$

The corresponding autonomous L\"owner equation is
\[
 \frac{d}{dt}\psi_t(z) = 1 - (1+ik)\psi_t(z),
\]
and its solution is
\[
 \psi_t(z) = e^{-t(1+ik)}z  + \frac{1 - e^{-t(1+ik)}}{1+ik}.
\]

Let us analyze the dynamical properties of this evolution.  The Denjoy-Wolff point of the evolution $\psi_t(z)$ is $\tau_0 = \frac{1}{1+ik}$. The point $\tau_0$ is hyperbolic if $k=0$, and elliptic otherwise. The Herglotz function in the Berkson-Porta decomposition of $f$ is
\[
 p_0 (z) = -\frac{1+k^2}{z-(1-ik)}.
\]
\subsubsection*{Example 2}

In the previous example, the solution is given by a family of M\"obius transformations (in particular, circles are always transformed into circles). If one is interested in the evolution of shapes, it could be more relevant to look at examples where $p(z,t)$ is a transcendental analytic function with non-negative real part.  Figure \ref{fig:expevolution} shows the evolution of the domains $\phi_t(\mathbb{D})$ corresponding to the function $p(z,t) = \exp\frac{\pi z}{2 \tau}$.
\begin{figure}[h]
  \centering
  \subfloat[$t = \frac{\pi}{4}$]{\includegraphics[width=0.3\textwidth]{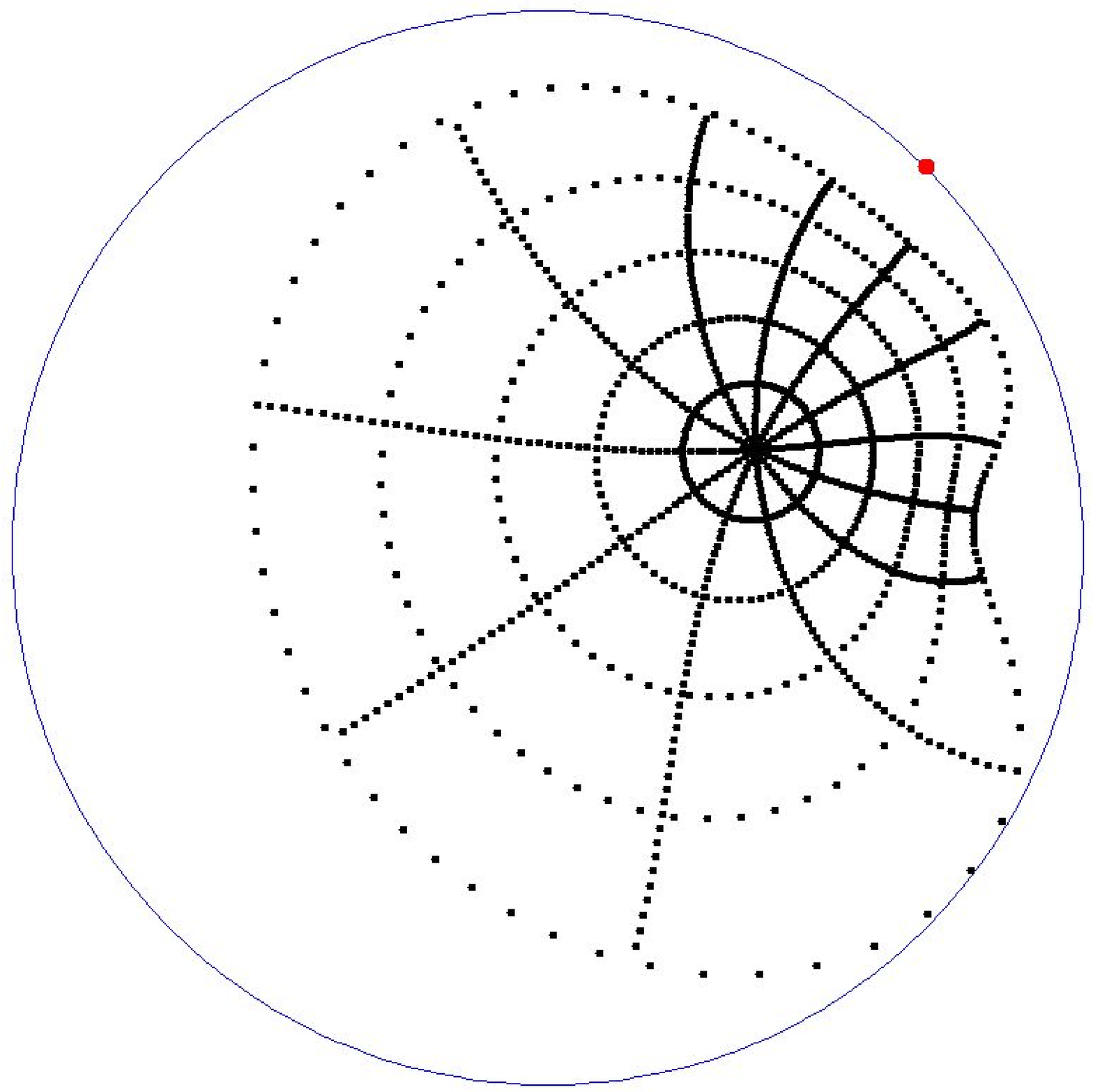}}  \hspace{5pt} \subfloat[$t = \frac{\pi}{2}$]{\includegraphics[width=0.3\textwidth]{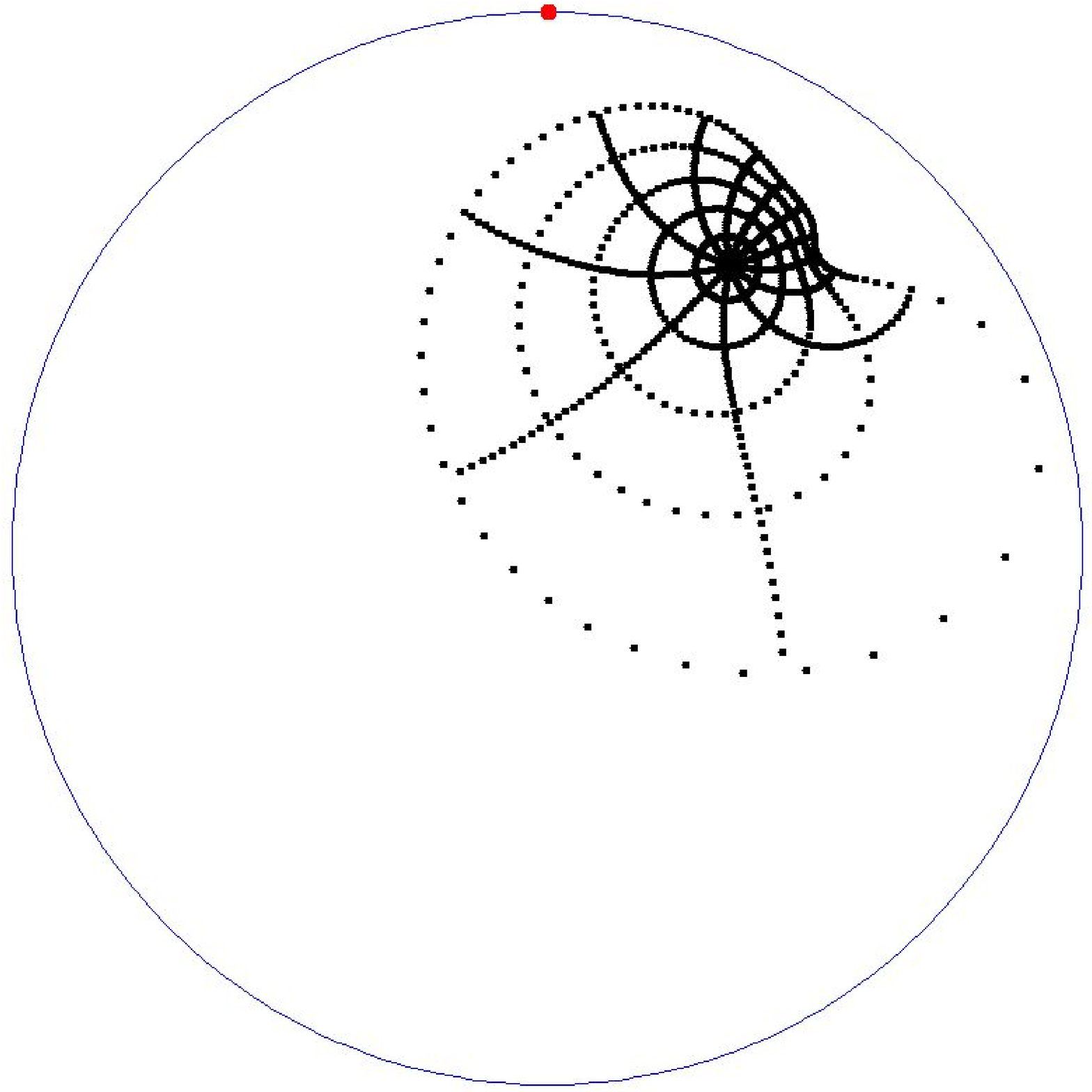}}  \hspace{5pt}  \subfloat[$t = \frac{3 \pi}{4}$]{\includegraphics[width=0.3\textwidth]{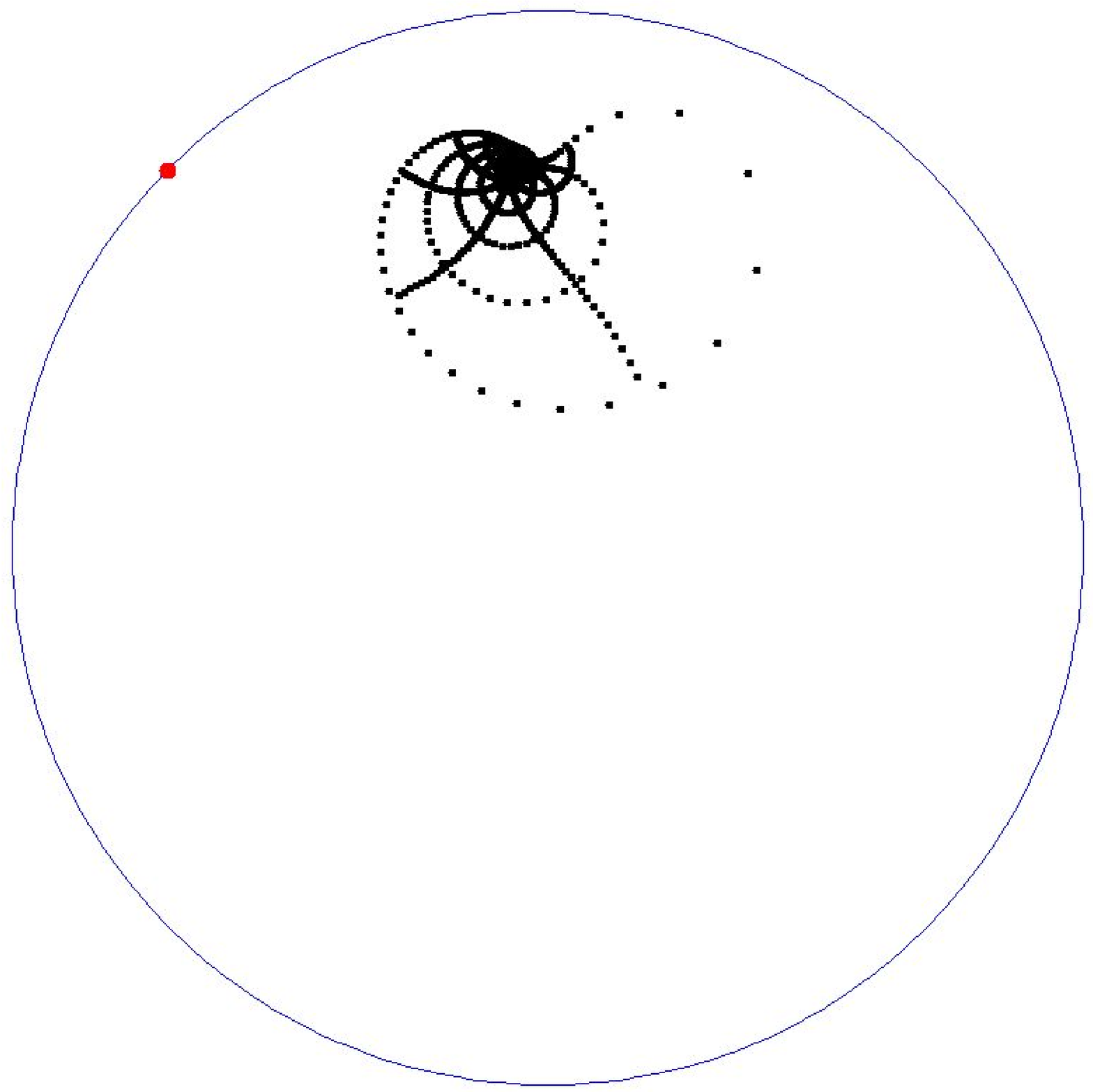}}
  \caption{Evolution of $\phi_t(\mathbb{D})$ for $p(z,t) = \exp(\pi z / 2 \tau )$, $k=1$ (the  point in $\mathbb T$  represents $\tau(t)$). }
\label{fig:expevolution}
\end{figure}

A Java applet demonstrating examples with different Herglotz functions and different values of $k$ is available at \url{http://folk.uib.no/giv023/loewnershapes}.

\subsection{Families of automorphisms} Let us describe the class of Herglotz functions generating families of automorphisms of the unit disk, using several basic results of  generation theory of semigroups of holomorphic self-maps of the unit disk. A summary of these results can be found, e.~g., in \cite[Chapter 2]{ShoikhetLinearization}.

\begin{proposition}
 Let the semigroup $\{\psi_t(z)\}_{t\geq 0}$ defined by \eqref{eq:autonomLoewner}   for some $k=k_0\in \mathbb{R}$ be a family of automorphisms of $\mathbb{D}$. Then $\{\psi_t(z)\}_{t\geq 0}$ is a family of automorphisms  $\mathbb{D}$ for all $k\in \mathbb{R}.$
\end{proposition}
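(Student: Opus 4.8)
The plan is to reduce the statement to the classical infinitesimal characterization of one-parameter \emph{groups} of disk automorphisms, and then to observe that the parameter $k$ enters the generator only through a term that stays inside the relevant Lie algebra. Recall first that a one-parameter semigroup of holomorphic self-maps of $\mathbb{D}$ consists of automorphisms if and only if its infinitesimal generator $f$ is a complete polynomial vector field of the form
\[
f(z) = \gamma + i\beta z - \overline{\gamma}\, z^2, \qquad \gamma\in\mathbb{C},\ \beta\in\mathbb{R};
\]
these are exactly the elements of the three-real-dimensional Lie algebra $\mathfrak{aut}(\mathbb{D})$ of $\operatorname{Aut}(\mathbb{D})$, and a semigroup of automorphisms is automatically a group (see the generation theory recalled above, e.g.\ \cite[Chapter 2]{ShoikhetLinearization}). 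Equivalently, this shape amounts to the tangency condition $\operatorname{Re}\bigl(\overline{z}\,f(z)\bigr)=0$ on $\mathbb{T}$, which one checks directly on the displayed polynomial.

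Next I would isolate the $k$-dependence of the generator. Writing
\[
f_k(z) = (z-1)^2\tilde{p}(z) - ikz = g(z) - ikz, \qquad g(z):=(z-1)^2\tilde{p}(z),
\]
the key observation is that $g$ does \emph{not} depend on $k$: the driving parameter appears solely in the purely imaginary linear term $-ikz$. By hypothesis, for $k=k_0$ the semigroup consists of automorphisms, so $f_{k_0}\in\mathfrak{aut}(\mathbb{D})$, i.e.
\[
g(z) - ik_0 z = \gamma_0 + i\beta_0 z - \overline{\gamma_0}\, z^2
\]
for some $\gamma_0\in\mathbb{C}$ and $\beta_0\in\mathbb{R}$; equivalently $g(z) = \gamma_0 + i(\beta_0+k_0)z - \overline{\gamma_0}\,z^2$.

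Substituting this back, for an arbitrary $k\in\mathbb{R}$ one obtains
\[
f_k(z) = g(z) - ikz = \gamma_0 + i(\beta_0 + k_0 - k)z - \overline{\gamma_0}\, z^2 .
\]
Since $\beta_0 + k_0 - k$ is again real, $f_k$ has exactly the form characterizing generators in $\mathfrak{aut}(\mathbb{D})$, with the same quadratic and constant data $\gamma_0$ and only a shifted real linear coefficient. Hence $f_k$ generates a one-parameter group of automorphisms of $\mathbb{D}$ for every $k\in\mathbb{R}$, which is the claim.

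The only genuinely delicate point is the first step: correctly invoking the classification of group generators and making sure that ``consisting of automorphisms'' forces $f$ into the Lie algebra $\mathfrak{aut}(\mathbb{D})$, rather than merely into the larger cone of semigroup generators afforded by the Berkson--Porta representation. Once that characterization is in hand, the rest is the elementary algebraic remark that translating the real linear coefficient $\beta$ by $-k$ preserves the defining form of $\mathfrak{aut}(\mathbb{D})$, so the automorphism property is stable across all real $k$.
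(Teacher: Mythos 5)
Your proof is correct, but it takes a slightly different route from the paper's. The paper argues abstractly: since $\{\psi_t\}$ consists of automorphisms for $k=k_0$, the generator $f_1(z)=(1-z)^2\tilde{p}(z)-ik_0z$ is a \emph{complete} vector field; the rotation field $f_2(z)=i(k_0-k)z$ is also complete; and the sum of two complete holomorphic vector fields on $\mathbb{D}$ is again complete, so $f_1+f_2=(1-z)^2\tilde{p}(z)-ikz$ generates a group of automorphisms. You instead invoke the explicit polynomial classification $f(z)=\gamma+i\beta z-\overline{\gamma}z^2$ of generators of automorphism groups and observe that changing $k$ merely shifts the real linear coefficient $\beta$, which visibly preserves that form. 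The two arguments lean on the same underlying fact (the generators of $\operatorname{Aut}(\mathbb{D})$ form a three-dimensional real Lie algebra, hence a real vector space containing all fields $icz$, $c\in\mathbb{R}$), but they package it differently. Your version is more self-contained and transparent --- it makes the ``sum of complete fields is complete'' step, which is false for general vector fields and only works here because of the vector-space structure, completely explicit --- and it dovetails with the paper's own proof of the next proposition, which uses exactly the polynomial form you quote. The paper's version is shorter and avoids writing down the classification, at the cost of asking the reader to accept the closure of complete holomorphic fields under addition. You also correctly flag the one genuinely delicate point, namely that ``consists of automorphisms'' pins the generator to the Lie algebra $\mathfrak{aut}(\mathbb{D})$ and not merely to the Berkson--Porta cone; with the cited characterization in hand, the rest is elementary algebra and the proof is complete.
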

\begin{proof}
Note that since $\{\psi_t(z)\}_{t\geq 0}$ consists of automorphisms    $\mathbb{D}$ for $k=k_0,$ the vector field $f_1(z) = (1-z)^2 \tilde{p}(z) - i k_0 z$ is complete  (i.~e., the solution of the corresponding differential equation exists and is unique for all $-\infty < t < +\infty$). Also note that for a $k\in \mathbb{R}$ the vector field $f_2(z) = i (k_0 - k )z$ is complete. Now, the vector field $f_3(z) = (1-z)^2 \tilde{p}(z) - i k z$ is complete as a sum of two complete vector fields: $f_3 = f_1 + f_2,$ and thus, the generated semigroup $\psi_t(z)$ is a family of automorphisms.
\end{proof}

\begin{proposition}
 The family $\{\psi_t(z)\}_{t\geq 0}$ consists of automorphisms if and only if the corresponding Herglotz function is of the form $\tilde{p}(z) = A \, \frac{1+z}{1-z} + B\, i,$ where $A \geq 0,$ $B \in \mathbb{R}.$
\end{proposition}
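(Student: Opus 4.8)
The plan is to reduce the statement to the classical description of complete vector fields on $\mathbb{D}$. Since a one-parameter semigroup of holomorphic self-maps of $\mathbb{D}$ consists of automorphisms precisely when its infinitesimal generator is \emph{complete} (the solutions of the associated ODE exist for all $t\in(-\infty,+\infty)$), the whole question is: for which Herglotz functions $\tilde p$ is the generator
\[
f(z) = (z-1)^2\tilde p(z) - ikz
\]
complete? The key external input, available in \cite[Chapter 2]{ShoikhetLinearization}, is that the complete holomorphic vector fields on $\mathbb{D}$ are exactly the quadratic polynomials $f(z) = \bar\gamma + i\delta z - \gamma z^2$ with $\gamma\in\mathbb{C}$ and $\delta\in\mathbb{R}$. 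Thus everything comes down to matching this form against the representation $f(z)=(z-1)^2\tilde p(z)-ikz$.

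For the ``if'' direction I would substitute $\tilde p(z)=A\frac{1+z}{1-z}+Bi$ and use the identity $(z-1)^2\frac{1+z}{1-z}=(1-z)(1+z)=1-z^2$ to obtain, after collecting terms,
\[
f(z) = (A+Bi) - i(2B+k)z - (A-Bi)z^2 .
\]
This is of the complete quadratic form with $\gamma=A-Bi$ and $\delta=-(2B+k)\in\mathbb{R}$; since $A\ge 0$ is exactly what makes $\tilde p$ a genuine Herglotz function, $f$ is complete and the semigroup consists of automorphisms. (One could alternatively invoke the preceding proposition to reduce to $k=0$, but the direct computation is no longer.)

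For the ``only if'' direction I would run the argument backwards. Automorphy forces $f$ to be complete, hence $f(z)=\bar\gamma+i\delta z-\gamma z^2$, and therefore
\[
\tilde p(z) = \frac{f(z)+ikz}{(z-1)^2} = \frac{\bar\gamma + i(\delta+k)z - \gamma z^2}{(z-1)^2}.
\]
This is a rational function whose only possible pole in $\overline{\mathbb{D}}$ lies at the boundary point $z=1$ and is a priori of order two. The crux of the argument is to rule out a genuine double pole: because $\tilde p$ is a Herglotz function, its Herglotz representation gives the growth bound $|\tilde p(z)|=O\!\big(1/(1-|z|)\big)$ as $|z|\to1$, which is incompatible with a second-order pole at $1$. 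Hence the numerator must vanish at $z=1$, i.e. $\bar\gamma+i(\delta+k)-\gamma=0$, a single real condition equivalent to $\delta+k=2\im\gamma$. Factoring out $(z-1)$ then collapses the double pole to a simple one, and writing $\gamma=\alpha+i\beta$ yields
\[
\tilde p(z) = \frac{\gamma z + \bar\gamma}{1-z} = \alpha\,\frac{1+z}{1-z} - i\beta .
\]
Setting $A=\alpha=\re\gamma$ and $B=-\beta=-\im\gamma$ gives the stated form, and finally $\re\tilde p\ge 0$ forces $A\ge 0$, since $\re\frac{1+z}{1-z}=\frac{1-|z|^2}{|1-z|^2}>0$ on $\mathbb{D}$.

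The step I expect to be the main obstacle is exactly this exclusion of the second-order boundary pole. It is the only place where the positivity (and attendant growth) structure of Herglotz functions enters rather than pure algebra, and it is precisely what cuts the three real parameters of a general complete field down to the two parameters $(A,B)$ appearing in the statement.
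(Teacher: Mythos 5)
Your proof is correct and rests on the same key external input as the paper's: the identification of the complete holomorphic vector fields on $\mathbb{D}$ with the quadratic polynomials $\bar\gamma + i\delta z - \gamma z^2$, $\delta\in\mathbb{R}$ (the paper quotes \cite[Theorem 2.6]{ShoikhetLinearization} in the normalization $f(z)=-\bar a z^2+ibz+a$). The algebra in both directions also agrees, except that the paper first reduces to $k=0$ via the preceding proposition, whereas you carry $k$ along; this is immaterial. The one step where you genuinely diverge is the derivation of the single real condition that cuts the three real parameters of a complete field down to the two parameters $(A,B)$: the paper observes that $1$ is the attracting boundary point of the Berkson-Porta representation, so $\angle\lim_{z\to 1}f(z)=0$, which for a polynomial means $f(1)=0$ and hence $b=-2\im a$; you instead divide by $(z-1)^2$ and invoke the growth bound $|\tilde p(z)|=O\bigl(1/(1-|z|)\bigr)$ coming from the Riesz-Herglotz representation to exclude a second-order pole of $\tilde p$ at $1$, forcing the numerator to vanish there. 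The two conditions are the same linear equation on the coefficients, so the arguments are equivalent in substance; yours avoids appealing to the boundary fixed-point property of the semigroup at the cost of a (routine, and correct) growth estimate for functions of nonnegative real part. Both treatments handle the constraint $A\ge 0$ identically, as the condition for $\re\tilde p\ge 0$ on $\mathbb{D}$.
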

\begin{proof}
As the previous proposition suggests, it is sufficient to prove the proposition for $k=0.$ Let $f(z)$ be the generator of $\{\psi_t(z)\}_{t\geq 0}$ for $k=0,$ i.~e.,  $f(z) = (1-z)^2 \tilde{p}(z).$

Since $f(z)$ is a complete vector field, it must be a polynomial of the form
\[
 f(z) = -\bar{a} z^2 + i b z +a, \quad a\in \mathbb{C}, \quad b \in \mathbb{R},
\]
(see, e.~g., \cite[Theorem 2.6]{ShoikhetLinearization}). Since 1 is a fixed point of the semigroup, it follows  from the relation $f(1) =0,$ that $b = - 2 \im a.$ Hence, $f(z)$ is a polynomial of the form
\[
 f(z) = (-A + B i)\,z^2 - 2 B i z + (A + B i),
\]
which is precisely $(1-z)^2 \tilde{p}(z)$ with $\tilde{p}(z) = A \frac{1+ z}{1-z} + B i.$  The function $\tilde{p}(z)$ has nonnegative real part if and only if $A \geq 0.$ (The case $A<0$ corresponds to the Berkson-Porta decomposition of $f(z)$ with the attracting point $-\frac{A+ B i}{A - B i}$ and the Herglotz function $\tilde{p}(z) = \frac{(z-1) \left(A^2+B^2\right)}{z (A-i B)+A+i B}$).
\end{proof}

Since $\tilde{p}(0) = A + B i,$ we will also write the Herglotz functions generating families of automorphisms as  $\tilde{p}(z) = \frac{1+z}{1-z} \re \tilde{p}(0) + i \im \tilde{p}(0).$

The differential equation for the semigroup generated by $\tilde{p}(z) = A \, \frac{1+z}{1-z} +  B\, i $  is written as
\begin{equation}
 \label{eq:automorphpsi}
 \frac{d \psi_t}{(-A+i B) \, \psi _t^2+(-2 i B-i k) \,\psi _t+A+i B}  = dt.
\end{equation}

Denote by D the discriminant of the polynomial in the denominator of the left-hand side: $D = 4 A^2-4 B k-k^2.$ 
\[
 \begin{cases}
  D > 0, \textrm{ if } 2\, \left(-\im \tilde{p}(0)  - |\tilde{p}(0)| \right) < k < 2\,(-\im \tilde{p}(0) + |\tilde{p}(0)|), \\
D = 0, \textrm{ if } k = 2\,(-\im \tilde{p}(0) \pm |\tilde{p}(0)|), \\
D < 0, \textrm{  if }  k < 2\, \left(-\im \tilde{p}(0)  - |\tilde{p}(0)| \right),  \textrm{ or } k > 2\,(-\im \tilde{p}(0) + |\tilde{p}(0)|).
 \end{cases}
\]

Depending on the sign of $D$, the nature of the semigroup $\psi_t$ changes qualitatively. One can check that 
\[
\psi_t(z) \textrm{ is }\begin{cases}
  \textrm{hyperbolic, if } 2\, \left(-\im \tilde{p}(0)  - |\tilde{p}(0)| \right) < k < 2\,(-\im \tilde{p}(0) + |\tilde{p}(0)|), \\
 \textrm{parabolic, if } k = 2\,(-\im \tilde{p}(0) \pm |\tilde{p}(0)|), \\
\textrm{elliptic,  if }  k < 2\, \left(-\im \tilde{p}(0)  - |\tilde{p}(0)| \right),  \textrm{ or } k > 2\,(-\im \tilde{p}(0) + |\tilde{p}(0)|).
\end{cases}
\]

In the elliptic case  \eqref{eq:automorphpsi} leads to the following equation for $\psi_t(z)$
\begin{equation}
\label{eq:psilog}
\frac{\sqrt{-D} + 2 i A \psi_t(z) + 2 B (1 - \psi_t(z)) - k}{\sqrt{-D} - 2 i A \psi_t(z) - 2 B (1 - \psi_t(z)) + k} \cdot \frac{\sqrt{-D} - 2 i A z - 2 B (1 - z) + k}{\sqrt{-D} + 2 i A z + 2 B (1 - z) - k} = e^{- i \sqrt{-D} t}.
\end{equation}

A necessary and sufficient condition for the curve $\psi_t(z)$ to be closed is the existence of a value $t_0$ of $t,$ such that $\psi_{t_0} (z) = z$ and $\dot{\psi}_{t_0} = \dot{\psi}_0(z),$ i.~e.,  both the position and velocity coincide with the position and velocity at the initial moment $t=0$.  It follows from \eqref{eq:psilog} that $\psi_t(z)$ returns to the point $z$ for $t = \frac{2 \pi m}{\sqrt{-D}},$ $m \in \mathbb{Z}.$ From the original equation it follows that the velocity of a point of the trajectory $\psi_t(z)$ passing through the point $z$ is $\frac{(e^{ikt} - z)^2}{e^{ikt}}\, \tilde{p}(z /e^{ikt}),$ i.~e., it is $\frac{2\pi n}{k}$-periodic. Thus, it is necessary and sufficient that there exist integers $m,n \in \mathbb{Z},$ such that $\frac{2 \pi m}{\sqrt{-D}} = \frac{2\pi n}{k},$ and we arrive at the following proposition.

\begin{proposition}
\label{prop:closed}
 Let $\{\psi_t(z)\}_{t\geq 0}$ be an elliptic semigroup of automorphisms, i.~e., $\psi_t(z)$ solves the initial value problem,
\[
\begin{cases}
 \frac{d}{dt}\psi_t(z) = (\psi_t(z)-1)^2 (A \frac{1+\psi_t(z)}{1-\psi_t(z)} + B i) - ik\psi_t(z),\\
\psi_0(z)=z,
\end{cases}
\]
for $k < 2\, \left(-B  - \sqrt{A^2+B^2} \right)$,  or $k > 2\,(-B + \sqrt{A^2+B^2})$. 

Then the curves $\psi_t(z)$ are  closed if and only if
\[
 \frac{k}{\sqrt{ - 4 A^2 + 4B k + k^2}} \in \mathbb{Q}.
\]

\end{proposition}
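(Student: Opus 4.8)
The plan is to recall that the closed object in question is the original Loewner orbit in the disk, namely $\phi_t(z) = \tau(t)\,\psi_t(z) = e^{ikt}\psi_t(z)$, so that asking whether this trajectory is closed amounts to asking whether $t\mapsto\phi_t(z)$ is a \emph{periodic} function of $t$. I would write $\phi_t(z)$ as the product of the rotation $e^{ikt}$, which is periodic with period $T_\tau = 2\pi/k$, and of $\psi_t(z)$, which (as I verify next) is periodic with period $T_\psi = 2\pi/\sqrt{-D}$, where $-D = -4A^2+4Bk+k^2>0$ in the elliptic regime. The question then becomes the classical one of when a product of two periodic functions is again periodic, the answer being: exactly when the two periods are commensurable. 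Since $T_\tau/T_\psi = \sqrt{-D}/k$, this is precisely the condition $k/\sqrt{-D}\in\mathbb{Q}$ in the statement.

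First I would pin down the periodicity of $\psi_t$. Rewriting \eqref{eq:psilog} as $M(\psi_t(z)) = e^{-i\sqrt{-D}\,t}\,M(z)$, where $M$ is the M\"obius map appearing there and carries the two fixed points of the semigroup (one of which is the Denjoy--Wolff point $\tau_0\in\mathbb{D}$) to $0$ and $\infty$, exhibits $\psi_t$ as the conjugate $M^{-1}\circ R_t\circ M$ of the rotation $R_t:w\mapsto e^{-i\sqrt{-D}\,t}w$. Consequently $t\mapsto\psi_t$ is periodic with minimal period $T_\psi = 2\pi/\sqrt{-D}$ and, what I use repeatedly, $\psi_{T_\psi} = \id_{\mathbb{D}}$.

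The \emph{sufficiency} direction is then immediate and reproduces the heuristic preceding the proposition. If $k/\sqrt{-D} = n/m$ in lowest terms, I set $T := m\,T_\psi = n\,T_\tau$; the equality $m\,T_\psi = n\,T_\tau$ is just $\tfrac{2\pi m}{\sqrt{-D}} = \tfrac{2\pi n}{k}$. Then $\psi_{t+T} = \psi_t$ because $T$ is an integer multiple of $T_\psi$, while $e^{ikT}=1$ because $T$ is an integer multiple of $T_\tau$; hence $\phi_{t+T}(z) = e^{ik(t+T)}\psi_{t+T}(z) = e^{ikt}\psi_t(z) = \phi_t(z)$, so the trajectory closes up after time $T$.

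The hard part will be the \emph{converse}, i.e. excluding an accidental periodicity in which the rotation $e^{ikt}$ and the elliptic motion $\psi_t$ compensate one another. Assuming $\phi_{t+T}(z) = \phi_t(z)$ for all $t$, I divide by $e^{ikt}$ and use the one-parameter group law $\psi_{t+T} = \psi_T\circ\psi_t$ to get $\psi_T(\psi_t(z)) = e^{-ikT}\psi_t(z)$ for every $t$. For $z\neq\tau_0$ the orbit $\{\psi_t(z)\}$ is a nondegenerate curve with infinitely many points, so the M\"obius transformations $\zeta\mapsto\psi_T(\zeta)$ and $\zeta\mapsto e^{-ikT}\zeta$ agree at infinitely many points and therefore coincide on $\widehat{\mathbb{C}}$. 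If $e^{-ikT}\neq 1$, the latter map fixes only $0$ and $\infty$, so $\psi_T$ would fix $0$, $\infty$, and $\tau_0$ --- three distinct points, since $\tau_0\neq 0$ (indeed $f(0)=\tilde p(0)=A+Bi\neq 0$ for a nontrivial family) --- forcing $\psi_T=\id_{\mathbb{D}}$ and then $e^{-ikT}=1$, a contradiction. Hence $e^{ikT}=1$ and $\psi_T=\id_{\mathbb{D}}$, so $T$ is simultaneously an integer multiple of $T_\tau$ and of $T_\psi$, which is exactly $k/\sqrt{-D}\in\mathbb{Q}$. I expect this fixed-point comparison, together with setting aside the exceptional point $z=\tau_0$ (whose trajectory $e^{ikt}\tau_0$ is a circle, hence always closed), to be the only genuinely delicate step; the velocity-matching formulation used in the text corresponds exactly to the condition $e^{ikT}=1$, since the field $\tfrac{(e^{ikt}-z)^2}{e^{ikt}}\,\tilde p(z e^{-ikt})$ returns to its initial value precisely when $e^{ikt}$ does.
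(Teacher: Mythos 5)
Your proof is correct and rests on the same mechanism as the paper's: write $\phi_t(z)=e^{ikt}\psi_t(z)$ and ask when the rotation period $2\pi/k$ and the period $2\pi/\sqrt{-D}$ of the elliptic one-parameter group $\psi_t$ are commensurable. (You are also right that, despite the wording of the statement, the curve whose closedness is at issue is $\phi_t(z)$, as the figures confirm; the orbit of the autonomous flow $\psi_t$ is closed for every admissible $k$.) Where you genuinely depart from the paper is in the converse. The paper argues informally that a closed curve must have matching position and velocity at some $t_0$, that position matching forces $t_0\in\frac{2\pi}{\sqrt{-D}}\mathbb{Z}$ and periodicity of the velocity field forces $t_0\in\frac{2\pi}{k}\mathbb{Z}$; this tacitly identifies a return of $\phi_t$ with a return of $\psi_t$ and does not by itself exclude an ``accidental'' closure in which $e^{ikt_0}\psi_{t_0}(z)=z$ while $e^{ikt_0}\neq 1$. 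Your argument closes exactly that gap: from $\phi_{t+T}=\phi_t$ you get $\psi_T(w)=e^{-ikT}w$ on the infinite orbit $\{\psi_t(z)\}$, hence everywhere by M\"obius rigidity, and the three-fixed-point count ($0$, $\infty$, $\tau_0$) forces $e^{ikT}=1$ and $\psi_T=\id_{\mathbb{D}}$ separately, which is the commensurability condition. You also correctly isolate the exceptional initial point $z=\tau_0$, whose trajectory $e^{ikt}\tau_0$ is a circle for every $k$ so that the ``only if'' direction fails there; the paper does not mention this. The one caveat is that your converse takes ``closed'' to mean periodic ($\phi_{t+T}=\phi_t$ for all $t$) rather than merely returning to the starting point; this is the natural reading, and the one the paper's position-plus-velocity condition is meant to encode, but it deserves an explicit sentence.
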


Figures \ref{fig:Cayley} and \ref{fig:pi05} illustrate Proposition \ref{prop:closed}.
\begin{figure}
\center
\epsfig{file=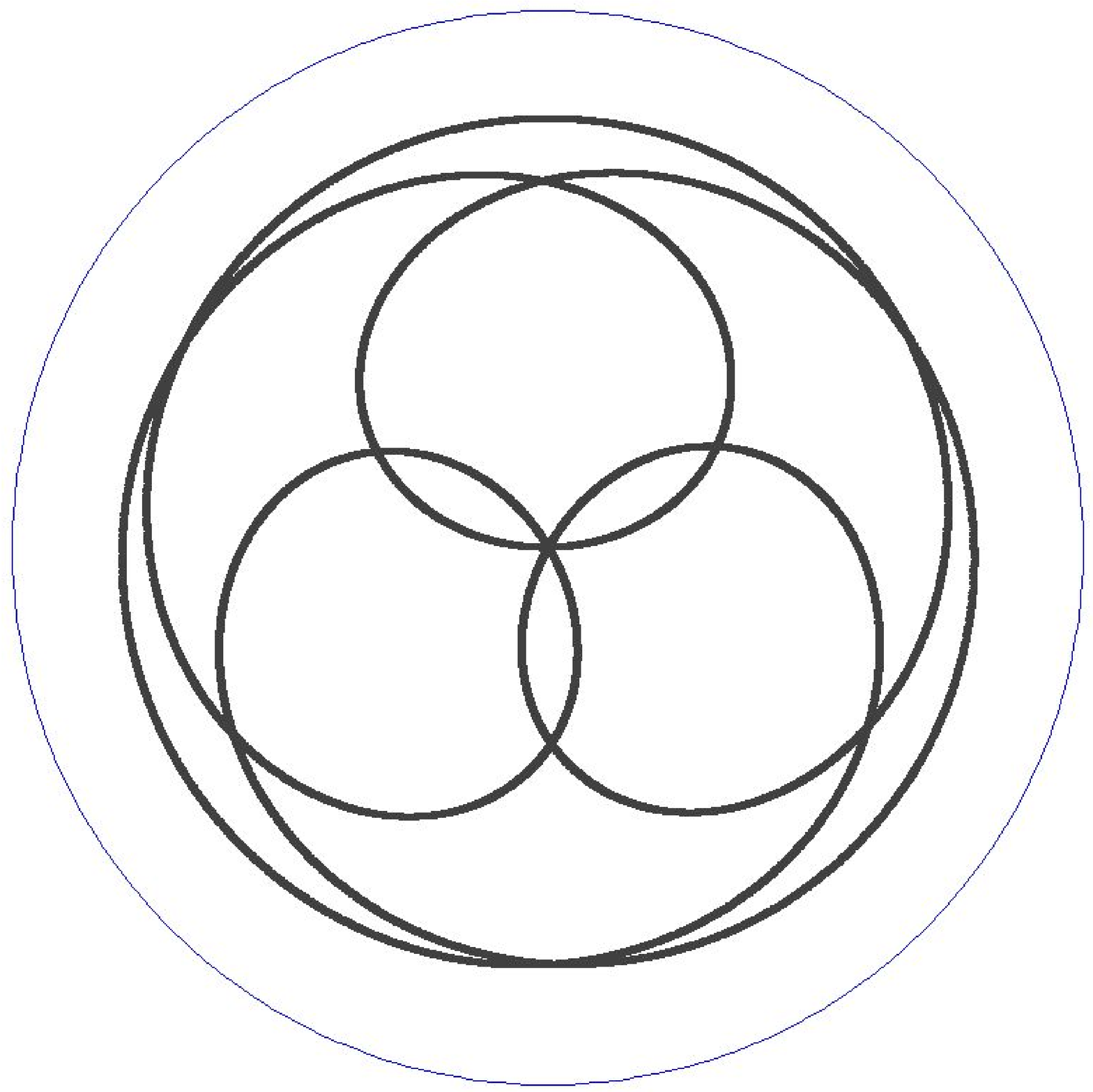,height=6cm}
\caption{The closed curve $\phi_t(0)$ for $p=\frac{\tau+z}{\tau-z}$, $k=2.5.$}
 \label{fig:Cayley}
\end{figure}

\begin{figure}[h]
\center
\epsfig{file=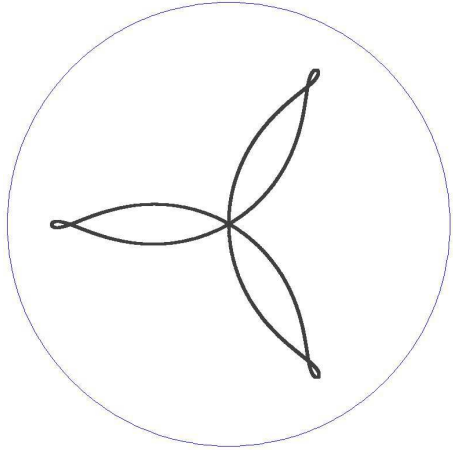,height=6cm}
 \caption{The closed curve $\phi_t(0)$ for $p=i$, $k=0.5.$}
 \label{fig:pi05}
\end{figure}

\subsection{Sufficient conditions for ellipticity}
\label{subseq:suffellipticity}
We answered the question about the values of $k$ for which the semigroup $\{\psi_t(z)\}_{t\geq 0}$ is elliptic, hyperbolic or parabolic completely in the case of automorphisms. Let us consider the general case of semigroups of holomorphic self-maps of $\mathbb{D}$

In the case of $k=0$ the semigroup $\{\psi_t(z)\}_{t\geq 0}$ is either hyperbolic (if $\angle \lim_{z\to 1} (1-z) \tilde{p}(z)  < 0$) or parabolic (if $\angle \lim_{z\to 1} (1-z) \tilde{p}(z) = 0$). 

In the rest of this section we assume $k \neq 0.$ Since the semigroup $\{\psi_t(z)\}_{t\geq 0}$ satisfies the equation
\[
  \frac{d}{dt}\psi_t(z) = (\psi_t(z)-1)^2 \tilde{p}(\psi_t(z)) - ik\psi_t(z),
\]
it is necessary and sufficient for ellipticity of $\psi_t$ that the equation
\begin{equation}
\label{eq:generatorzero}
 (1-z)^2 \tilde{p}(z) - ik z = 0 
\end{equation}
has a solution $z_0$ inside $\mathbb{D}.$ 

Equation \eqref{eq:generatorzero} can be rewritten as
\begin{equation}
\label{eq:generatorzerokoebe}
 \tilde{p}(z) = \koebe_k ( z),
\end{equation}
where the function 
\[
 \koebe_k(z) = \frac{ikz}{(1-z)^2}
\]
is the rotated and rescaled Koebe function $\frac{z}{(1-z)^2}.$ The image of the unit circle $\mathbb{T}$ under $\koebe_k$ is the ray along the imaginary axis going from $ik$ to $\infty.$ The inverse function is given by 
\[
 \koebe_k^{-1}(z) = \frac{\sqrt{\frac{4}{ik}z + 1}-1}{\sqrt{\frac{4}{ik} z + 1}+1}.
\]

If we rewrite \eqref{eq:generatorzerokoebe} as 
\[
 \koebe_k^{-1} (\tilde{p}(z)) = z,
\]
then the question about  zeros of the infinitesimal generator of $\psi_t$ reduces to the question about fixed points of the map $F(z) = \koebe_k^{-1} (\tilde{p}(z))$ in $\overline{\mathbb{D}}.$

\begin{proposition}
 Let $\re p(z) > \epsilon >0 $ for all $z\in \mathbb{D}.$ Then $\psi_t(z)$ is elliptic for all $k \neq 0.$
\end{proposition}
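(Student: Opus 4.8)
The plan is to exploit the reduction already prepared in the text: the semigroup $\{\psi_t\}$ is elliptic if and only if its generator $f(z)=(1-z)^2\tilde p(z)-ikz$ has a zero in $\mathbb{D}$, equivalently its Berkson-Porta (= Denjoy-Wolff) point $\tau_0$ lies in $\mathbb{D}$, equivalently the map $F=\koebe_k^{-1}\circ\tilde p$ has a fixed point in the open disk. Writing the hypothesis as $\re\tilde p>\epsilon>0$, I would first check that $F$ is a genuine holomorphic self-map of $\mathbb{D}$: the bound places $\tilde p(\mathbb{D})$ in the right half-plane, which is contained in $\koebe_k(\mathbb{D})=\mathbb{C}\setminus(\text{slit on }i\mathbb{R})$, and $\koebe_k^{-1}$ sends it back into $\mathbb{D}$. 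The whole task then becomes: show $\tau_0\notin\mathbb{T}$. So suppose, for contradiction, that $\tau_0=\sigma\in\mathbb{T}$.

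Since $\sigma$ is the boundary Denjoy-Wolff point of the generator, the angular-limit fact recorded earlier gives $\angle\lim_{z\to\sigma}f(z)=0$, i.e. $\angle\lim_{z\to\sigma}(1-z)^2\tilde p(z)=ik\sigma$. I would dispatch the case $\sigma\neq1$ at once: dividing by $(1-\sigma)^2\neq0$ yields $\angle\lim_{z\to\sigma}\tilde p(z)=ik\sigma/(1-\sigma)^2=\koebe_k(\sigma)$, which is purely imaginary because $\koebe_k$ carries $\mathbb{T}$ into $i\mathbb{R}$. This is impossible, as the angular limit of a function with $\re\tilde p>\epsilon$ must have real part $\ge\epsilon>0$. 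Hence the only surviving candidate is $\sigma=1$, where $\koebe_k(\sigma)=\infty$ and correspondingly $\tilde p(z)\to\infty$.

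The hard part is excluding $\sigma=1$, precisely because there both $\tilde p$ and $\koebe_k$ blow up and a naive real-part comparison is inconclusive. The key input is a growth estimate coming from the Riesz-Herglotz representation: since $\re\tilde p\ge0$ one has $|\tilde p(z)|\le|\im\tilde p(0)|+2\,\re\tilde p(0)/(1-|z|)$, so $|\tilde p(z)|=O(1/|1-z|)$ along any Stolz angle at $1$. Against this, $\koebe_k(z)\sim ik/(1-z)^2$; writing $1-z=te^{i\gamma}$ one computes $\re\koebe_k(z)\sim(k/t^2)\sin 2\gamma$, so the choice $\gamma=\pi/4$ for $k>0$ (and $\gamma=-\pi/4$ for $k<0$) gives $\re\koebe_k(z)\sim|k|/t^2\to+\infty$ along a ray that is nontangential at $1$ (the ratio $|1-z|/(1-|z|)$ stays equal to $\sqrt2$). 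Comparing rates, $\re\koebe_k$ grows like $t^{-2}$ while $\re\tilde p\le|\tilde p|=O(t^{-1})$, so $\re\bigl(\tilde p(z)-\koebe_k(z)\bigr)<0$ for all small $t$ on this ray. But $\tau_0=1$ forces $f(z)=(1-z)^2p_0(z)$ with $p_0=\tilde p-\koebe_k$ of nonnegative real part throughout $\mathbb{D}$ (the Berkson-Porta form with attracting point $1$), a contradiction.

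Therefore $\tau_0\in\mathbb{D}$, so $f$ has an interior zero and $\{\psi_t\}$ is elliptic for every $k\neq0$. I expect the only delicate points to be the two quantitative inputs at $\sigma=1$: the nontangential bound $|\tilde p(z)|=O(1/|1-z|)$ for functions of positive real part, and the verification that $1-z=te^{\pm i\pi/4}$ really approaches $1$ within a Stolz angle (so the bound applies on that ray); both are routine but must be stated with care. Staying instead with the map $F$, an alternative for this last step is to use $1-F(z)\sim\sqrt{ik/\tilde p(z)}$, whence $|1-F(z)|\gtrsim(1-|z|)^{1/2}$ and $|1-F(z)|/|1-z|\to\infty$ nontangentially, contradicting the finiteness of the angular derivative of $F$ at a boundary Denjoy-Wolff point $1$.
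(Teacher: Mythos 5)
Your proof is correct, and its skeleton is the one the paper itself sets up in the preceding subsection: ellipticity is equivalent to an interior zero of $f(z)=(1-z)^2\tilde p(z)-ikz$, i.e.\ an interior fixed point of $F=\koebe_k^{-1}\circ\tilde p$. Where you and the paper part ways is in how the boundary is excluded. For $\sigma\in\mathbb{T}\setminus\{1\}$ your angular-limit argument ($\angle\lim\tilde p=\koebe_k(\sigma)\in i\mathbb{R}$ versus $\re\tilde p\geq\epsilon$) is the same idea as the paper's observation that $\koebe_k^{-1}(\tilde p(\overline{\mathbb{D}}))$ can meet $\mathbb{T}$ only at $1$; both hinge on $\epsilon>0$ separating $\tilde p(\mathbb{D})$ from $\koebe_k(\mathbb{T})\subset i\mathbb{R}$ away from $\infty$. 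At $\sigma=1$ the paper simply notes that $1$ cannot solve the generator equation: the very Herglotz bound you invoke, $|\tilde p(z)|=O\bigl(1/(1-|z|)\bigr)$, already gives $\angle\lim_{z\to1}(1-z)^2\tilde p(z)=0$, hence $\angle\lim_{z\to1}f(z)=-ik\neq0$, and one is done in one line. Your rate comparison along the ray $1-z=te^{\pm i\pi/4}$ (with $\re\koebe_k\sim|k|/t^2$ beating $\re\tilde p=O(1/t)$, contradicting $\re p_0\geq0$ in the Berkson--Porta form with attracting point $1$) is a valid but considerably heavier route to the same conclusion; its one merit is that it makes explicit that only $\re\tilde p\geq0$ is used at the point $1$, the strict bound $\epsilon$ being needed only away from $1$. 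The one step I would not accept as written is your closing ``alternative'': contradicting the finiteness of the angular derivative of $F$ at $1$ presupposes that $1$ is the Denjoy--Wolff point (or at least a regular boundary fixed point) of the \emph{map} $F$, which does not follow automatically from $1$ being the Denjoy--Wolff point of the \emph{semigroup}; since you offer it only as an aside, it does not affect the correctness of the main argument.
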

\begin{proof}
Solutions to the equation $K_k^{-1}(\tilde{p}(z))=z$ in the closed unit disk $\overline{\mathbb{D}}$ must lie in $K_k^{-1}(\tilde{p}(\mathbb{\overline{D}})).$ The intersection $K_k^{-1}(\tilde{p}(\overline{\mathbb{D}})) \cap \mathbb{T}$ may contain at most one point $z=1,$ and $1$ cannot be a solution of \eqref{eq:generatorzero} for $k\neq 0.$ Hence, the infinitesimal generator must have a zero  inside $\mathbb{D},$ and the semigroup $\{\psi_t(z)\}_{t\geq 0}$ is elliptic.
\end{proof}

The same argument leads to a more general statement.

\begin{proposition}
 Let for a given $k$ the Euclidean distance $\dist(\tilde{p}(\mathbb{\mathbb{T}}), \koebe_k (\mathbb{T})) > \epsilon> 0.$ Then $\psi_t(z)$ is elliptic for this value of $k$.
\end{proposition}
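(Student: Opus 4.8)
\medskip

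The plan is to reuse the reformulation established just above: for $k\neq 0$, the semigroup $\{\psi_t(z)\}_{t\geq 0}$ is elliptic precisely when the generator $(1-z)^2\tilde{p}(z)-ikz$ vanishes at some interior point, which in turn happens exactly when the map $F(z)=\koebe_k^{-1}(\tilde{p}(z))$ has a fixed point in $\mathbb{D}$. Thus everything reduces to producing an interior fixed point of $F$ from the hypothesis $\dist(\tilde{p}(\mathbb{T}),\koebe_k(\mathbb{T}))>\epsilon>0$, and the idea is to run the localization argument of the preceding proposition with this weaker, boundary-only assumption in place of $\re\tilde{p}>\epsilon$.

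First I would confirm that $F$ is a genuine holomorphic self-map of $\mathbb{D}$. For non-constant $\tilde{p}$ the minimum principle applied to the harmonic function $\re\tilde{p}\geq 0$ gives $\re\tilde{p}>0$ throughout $\mathbb{D}$, so $\tilde{p}(\mathbb{D})$ lies in the open right half-plane and cannot meet the slit $\koebe_k(\mathbb{T})$, which lies on the imaginary axis. Hence $\tilde{p}(\mathbb{D})$ sits inside the simply connected slit-complement $\koebe_k(\mathbb{D})=\mathbb{C}\setminus\koebe_k(\mathbb{T})$, where $\koebe_k^{-1}$ is single-valued, and $F=\koebe_k^{-1}\circ\tilde{p}$ maps $\mathbb{D}$ holomorphically into $\mathbb{D}$ (the constant case being immediate).

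Next I would locate the fixed point. Every holomorphic self-map of $\mathbb{D}$ has a Denjoy--Wolff point $\tau\in\overline{\mathbb{D}}$, and any such fixed point (interior, or on $\mathbb{T}$ in the angular-limit sense) lies in the closed image $\overline{F(\mathbb{D})}=\overline{\koebe_k^{-1}(\tilde{p}(\mathbb{D}))}$. A boundary point $\tau\in\mathbb{T}$ can belong to this closure only if a sequence $\tilde{p}(z_n)$ tends either to a finite point of the slit $\koebe_k(\mathbb{T})$ or to $\infty$. The first alternative is ruled out by the distance hypothesis (together with $\tilde{p}(\mathbb{D})\subset\{\re>0\}$), which keeps the boundary cluster values of $\tilde{p}$ at distance $>\epsilon$ from the slit; the second forces $\tau=\koebe_k^{-1}(\infty)=1$. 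Since the Herglotz representation gives $(1-z)^2\tilde{p}(z)\to 0$, and hence $(1-z)^2\tilde{p}(z)-ikz$ has radial limit $-ik\neq 0$, as $z\to 1$, the point $1$ is not a zero of the generator, so the Denjoy--Wolff point cannot be on $\mathbb{T}$ and must lie in $\mathbb{D}$, giving ellipticity.

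The step I expect to be the main obstacle is the behavior near $z=1$, equivalently near $\infty$ along the slit. The hypothesis controls only the Euclidean distance between $\tilde{p}(\mathbb{T})$ and $\koebe_k(\mathbb{T})$, yet a boundary value $\tilde{p}(\zeta)$ may be far from the slit while having large modulus, so its $\koebe_k^{-1}$-image can still approach $1\in\mathbb{T}$; this is exactly why the slit-complement geometry of $\koebe_k$ and the explicit nonvanishing of the generator at $z=1$ must be invoked to exclude $\tau=1$. When $\tilde{p}$ is bounded this difficulty evaporates, since then $\overline{\tilde{p}(\mathbb{D})}$ is a compact subset of the slit-complement at positive distance from $\koebe_k(\mathbb{T})$, so $F$ extends continuously to $\overline{\mathbb{D}}$ with image compactly contained in $\mathbb{D}$ and Brouwer's theorem yields an interior fixed point at once; the unbounded case is where the argument must be made with care.
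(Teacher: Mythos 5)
Your proposal is correct and follows essentially the same route as the paper: the paper gives no separate proof here but remarks that ``the same argument'' as in the preceding proposition applies, namely confining all solutions of $\koebe_k^{-1}(\tilde{p}(z))=z$ in $\overline{\mathbb{D}}$ to $\koebe_k^{-1}(\tilde{p}(\overline{\mathbb{D}}))$, observing that this set meets $\mathbb{T}$ in at most the point $1$, and excluding $1$ because the generator has nonzero angular limit $-ik$ there --- which is exactly the argument you reconstruct. Your explicit treatment of the point $z=1$ via $(1-z)^{2}\tilde{p}(z)\to 0$ from the Herglotz representation only spells out a step the paper leaves implicit.
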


Next proposition states that,  in general, large values of $k$  correspond to elliptic semigroups.

\begin{proposition}
For each Herglotz function $\tilde{p}(z)$ there exists $k_0,$ such that for all $|k|>k_0$ the semigroup $\{\psi_t(z)\}_{t\geq 0}$ is elliptic. Moreover, the Denjoy-Wolff point of $\psi_t(z)$ can approach $0$ arbitrarily close, provided that $k_0$ is large enough.
\end{proposition}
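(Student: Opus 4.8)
\emph{The plan} is to exploit the ellipticity criterion already established above: the semigroup $\{\psi_t(z)\}_{t\geq 0}$ is elliptic exactly when its generator $f(z)=(1-z)^2\tilde{p}(z)-ikz$ has a zero inside $\mathbb{D}$, and in that case the interior zero is the Denjoy-Wolff point of the semigroup. So it suffices to produce such a zero and to control its location as $|k|\to\infty$. Rather than work with the fixed-point formulation $\koebe_k^{-1}(\tilde{p}(z))=z$, which would force one to keep track of the branch cut of $\koebe_k^{-1}$, I would recast $f(z)=0$ directly as the fixed-point equation $z=G(z)$, where
\[
G(z):=\frac{(1-z)^2\tilde{p}(z)}{ik}.
\]
The map $G$ is holomorphic on all of $\mathbb{D}$, and the factor $1/(ik)$ makes it uniformly small for large $|k|$; this is the same phenomenon as $\koebe_k^{-1}(w)\to 0$ (for fixed $w$) noted in the discussion above, but in a form free of multivaluedness.

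First I would fix once and for all a radius $r\in(0,1)$, say $r=1/2$. Since $\tilde{p}$ is holomorphic, the quantities $M_r:=\max_{|z|\le r}|\tilde{p}(z)|$ and $M'_r:=\max_{|z|\le r}|\tilde{p}'(z)|$ are finite. Then for $|z|\le r$ one has the elementary bounds
\[
|G(z)|\le \frac{(1+r)^2 M_r}{|k|},\qquad |G'(z)|\le \frac{(1+r)\bigl(2M_r+(1+r)M'_r\bigr)}{|k|}.
\]
Hence there is a threshold $k_0=k_0(r,\tilde{p})$ such that for every $|k|>k_0$ the map $G$ carries the closed disk $\{|z|\le r\}$ into itself \emph{and} is a Euclidean contraction there. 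By the Banach fixed-point theorem $G$ then has a fixed point $z_0$ in $\{|z|\le r\}\subset\mathbb{D}$, which is exactly an interior zero of $f$; consequently $\psi_t$ is elliptic and $z_0$ is its Denjoy-Wolff point.

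Finally, the fixed-point identity itself yields the quantitative claim: $|z_0|=|G(z_0)|\le (1+r)^2 M_r/|k|\to 0$ as $|k|\to\infty$. Thus, with $r$ fixed, the Denjoy-Wolff point is pushed arbitrarily close to $0$ simply by taking $|k|$ large, which is the second assertion (given any $\varepsilon>0$, take $|k|>(1+r)^2 M_r/\varepsilon$).

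I do not expect a genuine obstacle here; the proposition is soft once the $1/(ik)$ scaling is isolated. The only points needing care are technical: arranging the into-self property and contractivity of $G$ \emph{simultaneously} (both hold beyond an explicit threshold for $|k|$), and confirming that the zero produced is genuinely the Denjoy-Wolff point. The latter follows because a Berkson-Porta generator $f(z)=(z-\tau_0)(\overline{\tau_0}z-1)p_0(z)$ with $\re p_0\ge 0$ can have no other interior zero: the factor $\overline{\tau_0}z-1$ vanishes only at $1/\overline{\tau_0}\notin\mathbb{D}$, while an interior zero of $p_0$ would force $\re p_0\equiv 0$ by the minimum principle for harmonic functions, so the interior zero $z_0$ is unique and equals $\tau_0$.
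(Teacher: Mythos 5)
Your proof is correct and follows essentially the same strategy as the paper's: both recast the ellipticity condition $(1-z)^2\tilde{p}(z)-ikz=0$ as a fixed-point problem on a fixed disk $\overline{\mathbb{D}}_r$ and apply the Banach contraction principle, using the $1/k$ scaling to get both invariance and contractivity for large $|k|$, and both read off $|z_0|\to 0$ from the fixed-point identity. The only difference is your choice of auxiliary map $G(z)=(1-z)^2\tilde{p}(z)/(ik)$ in place of the paper's $F=\koebe_k^{-1}\circ\tilde{p}$, which sidesteps the square-root branch and supplies the explicit Lipschitz bound the paper merely asserts; your closing observation that the interior zero is the unique Denjoy--Wolff point is a harmless extra that the paper leaves implicit.
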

\begin{proof}
 Let $\overline{\mathbb{D}}_r = \{z : |z| \leq r\}.$ It is easy to see that for $k$ large enough the map 
\begin{equation}
\label{eq:Fdef}
F(z)=  \koebe_k^{-1} (\tilde{p}(z))= \frac{\sqrt{\frac{4}{ik}\tilde{p}(z) + 1}-1}{\sqrt{\frac{4}{ik} \tilde{p}(z) + 1}+1} 
\end{equation}
is a strict contraction. Indeed, the set $\tilde{p}(\overline{\mathbb{D}}_r)$ is bounded, so for  large $k,$ the set $\frac{4}{ik} \tilde{p}(\overline{\mathbb{D}}_r)$ is contained in an arbitrarily small neighborhood of  $0,$ the sets $\frac{4}{ik} \tilde{p}(\overline{\mathbb{D}}_r) + 1$ and $\sqrt{\frac{4}{ik} \tilde{p}(\overline{\mathbb{D}}_r) + 1}$ are contained in arbitrarily small neighborhoods of $1$, and $F(\overline{\mathbb{D}}_r)$ is contained in an arbitrarily small disk centered at the origin. Thus, $F$ is a strict contraction and has a unique fixed point $z_0 =F(z_0) ,$ and the fixed point $z_0$ lies in a small disk centered at the origin.
\end{proof}

\begin{proposition}
 Suppose that for a given $k \neq 0$  the semigroup $\psi_t(z)$ defined by \eqref{eq:autonomLoewner} is elliptic, and let $z_0$ be its Denjoy-Wolff point. If $k >0,$ then $\im z_0 \leq 0,$ and if $k <0,$ then $\im z_0 \geq 0.$ Moreover, the equality $\im z_0 = 0$  is possible, if and only if, $\tilde{p}(z) \equiv B i,$ $B \in \mathbb{R}.$
\end{proposition}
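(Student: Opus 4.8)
The plan is to exploit the ellipticity criterion established just above: the semigroup $\{\psi_t(z)\}_{t\ge0}$ is elliptic precisely when the generator $f(z)=(1-z)^2\tilde p(z)-ikz$ has a zero $z_0$ inside $\mathbb D$, and this $z_0$ is exactly the Denjoy--Wolff point. First I would rewrite the equation $f(z_0)=0$ in the form \eqref{eq:generatorzerokoebe}, namely $\tilde p(z_0)=\koebe_k(z_0)=ik\,\kappa(z_0)$, where $\kappa(z)=z/(1-z)^2$ is the classical Koebe function and $k$ is real. Since $\tilde p$ is a Herglotz function, $\re\tilde p(z_0)\ge 0$; and because $k$ is real, $\re\bigl(ik\,\kappa(z_0)\bigr)=-k\,\im\kappa(z_0)$. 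Hence the single scalar inequality
\[
 k\,\im\kappa(z_0)\le 0
\]
already contains all the information, and the whole proposition follows once the sign of $\im\kappa(z_0)$ is understood.

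The crux, and the step I expect to require the only genuine idea, is the elementary lemma that $\im\kappa(z)$ and $\im z$ have the same sign for every $z\in\mathbb D$, with simultaneous vanishing exactly when $z$ is real. I would prove this by passing to the reciprocal: $1/\kappa(z)=\frac{(1-z)^2}{z}=\frac1z+z-2$, so that $\im\bigl(1/\kappa(z)\bigr)=\im z\,\bigl(1-\tfrac1{|z|^2}\bigr)$, which has the sign opposite to $\im z$ since $|z|<1$. Taking reciprocals once more, via $\im(1/w)=-\im w/|w|^2$, flips the sign back and yields that $\im\kappa(z)$ has the same sign as $\im z$. Combined with $k\,\im\kappa(z_0)\le0$ this gives $k\,\im z_0\le0$, that is, $\im z_0\le 0$ when $k>0$ and $\im z_0\ge0$ when $k<0$, which is the first assertion.

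For the equality case I would run through the chain of equivalences furnished by the two displayed relations. Since $k\ne0$, the identity $\re\tilde p(z_0)=-k\,\im\kappa(z_0)$ shows $\im\kappa(z_0)=0\iff\re\tilde p(z_0)=0$, while the lemma gives $\im\kappa(z_0)=0\iff\im z_0=0$; hence $\im z_0=0$ is equivalent to $\re\tilde p(z_0)=0$. But $\re\tilde p$ is a non-negative harmonic function on $\mathbb D$, so if it attains its minimum value $0$ at the interior point $z_0$, the minimum principle forces $\re\tilde p\equiv0$, whence $\tilde p$ is a purely imaginary constant $Bi$, $B\in\mathbb R$. Conversely, if $\tilde p\equiv Bi$ then $\re\tilde p\equiv0$, so $\re\tilde p(z_0)=0$ and therefore $\im z_0=0$, closing the equivalence. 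The only real obstacle is the sign lemma for the Koebe function; everything else is bookkeeping with the Herglotz condition and the harmonic minimum principle.
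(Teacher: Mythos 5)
Your argument is correct and follows essentially the same route as the paper: both reduce the question to locating the interior zero $z_0$ of the generator via the relation $\tilde p(z_0)=\koebe_k(z_0)$ and then use $\re\tilde p\ge 0$ together with the geometry of the Koebe function, which is exactly what the paper's appeal to the fixed point of $F=\koebe_k^{-1}\circ\tilde p$ and the set $F(\mathbb D)$ amounts to. Your version merely makes explicit the two ingredients the paper leaves tacit --- the sign lemma $\operatorname{sgn}\im\kappa(z)=\operatorname{sgn}\im z$ via $1/\kappa(z)=1/z+z-2$, and the minimum principle for $\re\tilde p$ in the equality case --- and in doing so gets the sign bookkeeping right where the paper's one-line proof is ambiguous.
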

\begin{proof}
 Since the Denjoy-Wolff point of $\psi_t(z)$  is the fixed point of $F(z)$ defined by \eqref{eq:Fdef}, then it must lie in  the domain $F(\mathbb{D}).$ For $\tilde{p}(z) \equiv B i,$ $F(\mathbb{D}) = \{1 + \frac{1}{2B} \left(k - \sqrt{k^2 + 4 B}\right)\} \subset \mathbb{R}.$ Otherwise,  $\im F(\mathbb{D}) > 0,$ and we obtain the result.
\end{proof}

Of course, it might happen that the semigroup $\psi_t$ has boundary fixed points, even though it is elliptic. For example, let $\tilde{p}(z) = \frac{1-z}{1+z}.$ Then the function $F(z) = \koebe_k^{-1} (\tilde{p}(z))$ has fixed points on $\mathbb{T}$ for all $k.$ Indeed, the equation
\[
\frac{1-e^{i\theta}}{1+ e^{i\theta}} = \frac{ik e^{i\theta}}{(e^{i\theta} -1)^2},
\]
which can be rewritten as
\[
 \tan \frac{\theta}{2} =\frac12 \frac{k }{\cos \theta - 1},
\]
has solutions for any $k$.

\section{Stochastic case}
\label{sec:stochastic}
In this section we consider the generalized L\"owner evolution driven by a Brownian particle on the unit circle. In other words, we study the following initial value problem.

\begin{equation}
\label{eq:randomloewner}
\begin{cases}
 \frac{d}{dt} \phi_t (z,\omega) = \frac{(\tau(t,\omega)  - \phi_t(z,\omega))^2}{\tau(t,\omega)} p(\phi_t (z,\omega),t,\omega), \\
\phi_0(z,\omega) = z,
\end{cases}
 t \geq 0, \, z \in\mathbb{D}, \, \omega \in \Omega.
\end{equation}

Here, $\Omega$ is the sample space in the standard filtered probability space $(\Omega,\mathcal F, (\mathcal F_t), P)$ of Brownian motion, $\tau(t,\omega) = \exp (ikB_t(\omega))$, where $k \in \mathbb{R}$, and $B_t(\omega)$ denotes the standard 1-dimensional Brownian motion. The function $p(z,t,\omega)$ is a Herglotz function for each fixed $\omega\in \Omega.$ In order for $\phi_t(z,\omega)$ to be an It\^o process adapted to the Brownian filtration, we require that the function $p(z,t,\omega)$ is adapted to the Brownian filtration for each $z\in\mathbb{D}.$

For each fixed $\omega\in \Omega,$ equation \eqref{eq:randomloewner} may be considered as a deterministic generalized L\"owner equation with the Berkson-Porta data $(\tau(\cdot,\omega),p(\cdot, \cdot, \omega)).$ In particular, the solution $\phi_t(z,\omega)$ exists, is unique for each $t>0$ and $\omega\in\Omega,$ and moreover, is a family of holomorphic self-maps of the unit disk.

The equation in (\ref{eq:randomloewner}) is an example of a so-called \emph{random differential equation} (see, for instance, \cite{soong73}). Since for each fixed $\omega \in\Omega$ it may be regarded as an ordinary differential equation, the sample paths $t \mapsto \phi_t(z,\omega)$ have continuous first derivatives for almost all $\omega$ (Figure \ref{fig:randomloewner}). This is entirely different from the case of It\^{o} and Stratonovich \emph{stochastic differential equations}, where sample paths are nondifferentiable and tools of stochastic calculus are needed to solve those equations.

\begin{figure}[h]
 \centering
 \includegraphics[width=0.4 \textwidth]{./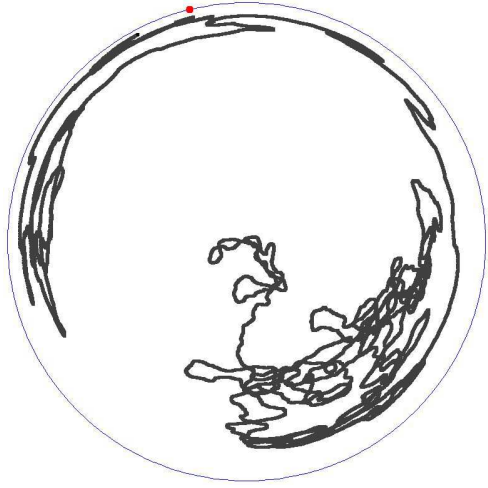}
 \caption{A sample path of $\phi_t(0,\omega)$ for $p(z,t) = \frac{\tau(t) + z }{ \tau(t)-z }$, $\tau(t) = e^{ikB_t},$ $k = 5$, $t \in[0,30]$.}
 \label{fig:randomloewner}
\end{figure}

\subsection{An explicitly solvable example}

Let $p(z,t,\omega) = \frac{\tau(t,\omega)}{\tau(t,\omega) - z}= \frac{e^{ikB_t(\omega)}}{e^{ikB_t(\omega)} - z},$ that is, the stochastic counterpart of the Herglotz function in Example 1 from the previous section. It makes  equation  \eqref{eq:randomloewner} linear:

\[
\frac{d}{dt} \phi_t (z,\omega) = e^{ikB_t(\omega)} - \phi_t(z,\omega),
\]
and a well-known formula from the theory of ordinary differential equation yields
\[
 \phi_t(z,\omega) = e^{-t} \left(z + \int_0^t e^{s} e^{ikB_s(\omega)}ds\right).
\]

Taking into account the fact that $\mathsf{E} e^{ikB_t(\omega)} = e^{-\frac12 t k^2},$ we can also write the expression for the mean function  $ \mathsf{E}\phi_t(z,\omega)$

\begin{equation}
\label{eq:meanfunctionphi}
\mathsf{E}\phi_t(z,\omega) = \begin{cases}
                               e^{-t}(z+t), \quad \quad \quad \quad k^2 = 2, \\
			      e^{-t} z + \frac{e^{-tk^2/2}-e^{-t}}{1-k^2/2},\quad \textrm{otherwise.}
                              \end{cases}
\end{equation}

Thus, in this example all maps $\phi_t$ and $\mathsf{E} \phi_t$ are affine transformations (compositions of a scaling and a translation).

In general, solving the random differential equation (\ref{eq:randomloewner}) is much more complicated than solving its deterministic counterpart, mostly because of the fact that for almost all $\omega$ the function $t \mapsto \tau(t,\omega)$ is nowhere differentiable.

\subsection{Invariance property}
If we assume again, as we did in the deterministic case, that the Herglotz function has the form $p(z,t,\omega) = \tilde{p}(z/\tau(t,\omega)),$ then it turns out that the process $\phi_t(z,\omega)$ has an important invariance property, that were crucial in development of SLE.

Let $s>0$ and introduce the notation 
\[
 \tilde{\phi}_t(z) = \frac{\phi_{s+t}(z)}{\tau(s)}. 
\]
Then $\tilde{\phi}_t(z)$ is the solution to the initial-value problem
\[
 \begin{cases}
  \frac{d}{dt} \tilde{\phi}_t (z,\omega) = \frac{\left(\tilde{\tau}(t,\omega)  - \tilde{\phi}_t(z,\omega)\right)^2}{\tilde{\tau}(t,\omega)} \tilde{p}\left(\tilde{\phi}_t (z,\omega)/\tilde{\tau}(t)\right), \\
\tilde{\phi}_0(z,\omega) =\phi_s(z,\omega)/\tilde{\tau}(s), 
 \end{cases}
\]
where $\tilde{\tau}(t) = \tau(s+t)/ \tau(s) = e^{ik(B_{s+t}-B_s)}$ is again a Brownian motion on $\mathbb{T}$ (because $\tilde{B}_t = B_{s+t}-B_s$ is a standard Brownian motion). In other words, the conditional distribution of $\tilde{\phi}_{t}$ given $\phi_{r},$ $r\in[0,s] $ is the same as the distribution of $\phi_t.$

\subsection{Stochastic change of variables}

By the complex It\^o formula (Appendix \ref{subseq:ComplexIto}), the process $\frac{1}{\tau(t,\omega)} = e^{-ik B_t}$ satisfies the equation

\[
 d e^{-ikB_t} = -ik e^{-i k B_t} dB_t - \frac{k^2}{2} e^{- ikB_t}dt.
\]

Let us denote $\frac{\phi_t(z,\omega)}{\tau(t,\omega)}$ by $\Psi_t(z,\omega).$ Applying the integration by parts formula to $\Psi_t,$ we arrive at the following initial value problem for the It\^o stochastic differential equation
\begin{equation}
\begin{cases}
\label{eq:stochasticloewner}
  d\Psi_t =  - ik \Psi_t dB_t  + \left( - \frac{k^2}{2}\Psi_t + (\Psi_t -1)^2 p(\Psi_t e^{ikB_t(\omega)}, t, \omega)\right) dt, \\
\Psi_0(z) = z.
\end{cases}
\end{equation}
A numerical solution to this equation for a specific choice of $p(z,t, \omega)$ is shown in Figure~\ref{fig:stochasticpi}.
\begin{figure}[h]
 \centering
 \includegraphics[width=0.4 \textwidth]{./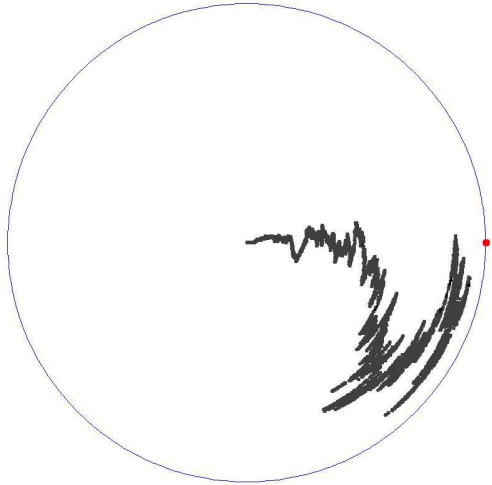}
 \caption{A sample path of $\Psi_t(0,\omega)$ for $p(z,t) \equiv i,$ $k=1$ and $t\in[0,2].$}
 \label{fig:stochasticpi}
\end{figure}

Analyzing the process  $\frac{\phi_t(z,\omega)}{\tau(t,\omega)}$ instead of the original process $\phi_t(z,\omega)$ is in many ways similar to one of the approaches used in SLE theory. Recall that many properties of SLE$_k,$ that is, of the solution to the problem
\[
\begin{cases}
 \frac{d}{dt}g_t(z) =\frac{2}{g_t (z)- \sqrt{k} B_t},\\
g_0(z) = z,
\end{cases}
t\geq 0, \quad  \im z > 0,
 \]
can be proved using the fact that $g_t(z) - \sqrt{k} B_t$ is a so-called Bessel process.

The image domains $\Psi_t(\mathbb{D},\omega)$ differ from $\phi_t(\mathbb{D},\omega)$ only by rotation. Due to the fact that $|\Psi_t(z,\omega)|=|\phi_t(z,\omega)|$, if we compare the processes $\phi_t(0,\omega)$ and $\Psi_t(0,\omega),$ we note that their first hit times of the circle $\mathbb{T}_r$ with radius $r<1$ coincide, i.~e.,
\[
\inf \{t \geq 0, |\phi_t(0,\omega)| = r \} = \inf \{t \geq 0, | \Psi_t(0,\omega)| = r \}.                                                                                                                                                                                                                                                                                                                                                                                                                                                                                                                                                                                                                                                                                                                                                                                                                                                                                                                                                                                                                                                                                                                                 \]
In other words, the answers to probabilistic questions about the expected time of hitting the circle $\mathbb{T}_r,$ the probability of exit from the disk $\mathbb{D}_r = \{z : |z| < r\},$ etc. are the same for $\phi_t(0,\omega)$ and $\Psi_t(0,\omega).$

\subsection{$\Psi_t$ as a diffusion. Associated semigroup of operators}
If the Herglotz function has the form $p(z,t,\omega) = \tilde{p}(z/\tau(t,\omega))$, then the equation \eqref{eq:stochasticloewner} becomes
\begin{equation}
\begin{cases}
\label{eq:loewnerdiffusion}
  d\Psi_t =  - ik \Psi_t dB_t  + \left( - \frac{k^2}{2}\Psi_t + (\Psi_t -1)^2 \tilde{p}(\Psi_t)\right) dt, \\
\Psi_0(z) = z,
\end{cases}
\end{equation}
and may be regarded as an equation of a 2-dimensional time-homogeneous real diffusion written in complex form. This implies, in particular, that $\Psi_t$ is a time-homogeneous strong Markov process. By construction, $\Psi_t(z)$ always stays in the unit disk.

Let $B_b(\mathbb{D})$ denote the Banach space of bounded complex-valued Borel functions $f: \mathbb{D} \to \mathbb{C}$ with  supremum norm $\|f\| = \sup_{z\in \mathbb{D}} |f(z)|$. To a time-homogeneous Markov process $\Psi_t$ taking values in $\mathbb{D}$ one can associate a family of operators $T_t$ acting on $B_b(\mathbb{D})$ whose action is defined by the formula
\begin{equation}
 (T_t f)(z) = \mathsf{E} f (\Psi_t(z))
\end{equation}
(see \cite{Applebaum,Dynkin}).

When $t=0,$ the operator $T_t$ is the identity map. For all $t\geq 0,$ the operators $T_t$ are linear by linearity of expectation, and $\|T_t\|\leq 1,$ because
\[
\|T_t f \| = \sup_{z\in \mathbb{D}} |\mathsf{E} f(\Psi_t(z))| \leq \sup_{z\in \mathbb{D}} \mathsf{E} |f(\Psi_t(z))| \leq \sup_{z\in \mathbb{D}} |f(z)| = \|f\|.
\]

Since $\Psi_t$ is a time-homogeneous Markov process, the operators $T_t$ form a semigroup \cite{Applebaum,oksendal2003}:

\begin{equation}
\label{eq:Tsemigroup}
 T_{s+t} = T_s T_t.
\end{equation}

The operators $T_t$ preserve the class of bounded continuous functions and the Banach space of bounded analytic functions $H^{\infty}(\mathbb{D}),$ as stated in the two propositions below.

\begin{proposition}[{cf.\cite[proof of Theorem 6.7.2]{Applebaum}}] If $f:\mathbb{D} \to \mathbb{C}$ is a bounded  continuous function, then $T_t f$ is bounded and continuous for every $t\geq 0$.
\end{proposition}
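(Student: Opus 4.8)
The plan is to separate the two assertions. Boundedness is already recorded above (the estimate $\|T_t f\|\le\|f\|$), so the entire content of the proposition is the continuity of the map $z\mapsto (T_tf)(z)=\mathsf{E}\,f(\Psi_t(z,\omega))$ for each fixed $t\ge 0$. Rather than invoke the Gronwall-type continuous-dependence estimate for the It\^o equation \eqref{eq:loewnerdiffusion} (the route underlying the cited proof of Applebaum), which is awkward here because the drift $(\Psi-1)^2\tilde p(\Psi)$ is in general unbounded as $\Psi$ approaches $\mathbb{T}$, I would exploit the conformal structure of the problem: for every fixed $t$ and $\omega$ the map $z\mapsto\Psi_t(z,\omega)$ is a holomorphic self-map of $\mathbb{D}$, and this yields a modulus of continuity that is uniform in both $t$ and $\omega$.

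The key step is precisely that uniform estimate. Since $\Psi_t(z,\omega)=\phi_t(z,\omega)/\tau(t,\omega)$ with $|\tau(t,\omega)|=1$, the map $z\mapsto\Psi_t(z,\omega)$ is the composition of the holomorphic self-map $\phi_t(\cdot,\omega)$ of $\mathbb{D}$ with a rotation, hence is itself a holomorphic self-map of $\mathbb{D}$. The Schwarz-Pick lemma then gives $|\Psi_t'(z,\omega)|\le (1-|\Psi_t(z,\omega)|^2)/(1-|z|^2)\le 1/(1-|z|^2)$. Fixing $\rho\in(0,1)$ and integrating this bound along the straight segment joining any two points $z,z'\in\overline{\mathbb{D}}_\rho=\{|\zeta|\le\rho\}$ (which stays in the convex set $\overline{\mathbb{D}}_\rho$) yields
\[
 |\Psi_t(z,\omega)-\Psi_t(z',\omega)|\le \frac{1}{1-\rho^2}\,|z-z'|,
\]
valid simultaneously for all $t\ge 0$ and all $\omega\in\Omega$.

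From here the conclusion is routine. I would fix $z\in\mathbb{D}$ and a sequence $z_n\to z$, and choose $\rho<1$ with $z,z_n\in\overline{\mathbb{D}}_\rho$ for large $n$; the estimate above then gives $\Psi_t(z_n,\omega)\to\Psi_t(z,\omega)$ for every $\omega$. Because the limit $\Psi_t(z,\omega)$ lies in the open disk $\mathbb{D}$, where $f$ is continuous, it follows that $f(\Psi_t(z_n,\omega))\to f(\Psi_t(z,\omega))$ pointwise in $\omega$; and since $|f|\le\|f\|$, the bounded convergence theorem yields $(T_tf)(z_n)=\mathsf{E}\,f(\Psi_t(z_n,\omega))\to\mathsf{E}\,f(\Psi_t(z,\omega))=(T_tf)(z)$. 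As $z$ was arbitrary, $T_tf$ is continuous.

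The main obstacle to keep in view is exactly the one the conformal estimate removes: $f$ is assumed merely continuous on $\mathbb{D}$ rather than uniformly continuous, and the diffusion $\Psi_t(z)$ may approach $\mathbb{T}$, so no uniform modulus of continuity for $T_tf$ is available and a naive attempt to bound $|(T_tf)(z)-(T_tf)(z')|$ uniformly in $z$ would fail. The point is that continuity at each fixed interior point requires only pathwise convergence of $\Psi_t(z_n,\omega)$ together with continuity of $f$ at the interior limit point, and the Schwarz-Pick bound supplies exactly this pathwise convergence with a constant independent of $\omega$ and $t$, so that the limit can be passed inside the expectation by bounded convergence.
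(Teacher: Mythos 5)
Your proof is correct and follows essentially the same route as the paper's: pointwise convergence of $f(\Psi_t(z_n,\omega))$ for each $\omega$, followed by dominated (bounded) convergence to pass the limit through the expectation. The only difference is that you justify the continuity of $z\mapsto\Psi_t(z,\omega)$ explicitly via the Schwarz--Pick lemma, whereas the paper simply asserts it (it is immediate since $\Psi_t(\cdot,\omega)$ is a holomorphic self-map of $\mathbb{D}$); this is a harmless, slightly more quantitative elaboration of the same argument.
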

\begin{proof}
 It has been shown above that $T_t$ is a bounded operator. It remains to show continuity. 

Let  $\{z_n\} _{n=1}^\infty \subset \mathbb{D}$ and $z_n \to z_0 \in \mathbb{D}.$  Since $f(z)$ and $\Psi_t(z,\omega)$ are continuous, we have that  $\lim_{n \to \infty} f(\Psi_t(z_n,\omega)) = f(\Psi_t(z_0,\omega))$ for every $\omega\in\Omega$. Since $f$ is bounded we also have that $|f(\Psi_t(z_n,\omega))| \leq \|f\| \in L^1(\Omega, dP).$ Hence, by the dominated convergence theorem, 
\[\lim_{n\to\infty} (T_t f)(z_n)= \lim_{n\to\infty} \mathsf{E} f(\Psi_t(z_n,\omega)) = \mathsf{E} f(\Psi_t(z,\omega)) = (T_t f)(z_0)\]
\end{proof}

\begin{proposition}
If $f\in H^{\infty}(\mathbb{D})$ then $T_t f \in H^\infty (z)$ for every $t\geq 0.$
\end{proposition}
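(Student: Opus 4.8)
The plan is to show that $T_t f$ is both bounded and holomorphic; boundedness is immediate since $\|T_t\|\le 1$, so the entire content lies in analyticity. My strategy is to exploit the fact that for each \emph{fixed} sample point $\omega$ the map $z\mapsto\Psi_t(z,\omega)$ is holomorphic, so that $f\circ\Psi_t(\cdot,\omega)$ is holomorphic for each $\omega$, and then to push analyticity through the expectation $\mathsf{E}$ by means of Morera's theorem combined with Fubini's theorem.

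First I would record the pointwise holomorphy. For each fixed $\omega\in\Omega$, equation \eqref{eq:randomloewner} is a deterministic generalized L\"owner equation, so its solution $\phi_t(\cdot,\omega)$ is a holomorphic self-map of $\mathbb{D}$; since $\Psi_t(z,\omega)=\phi_t(z,\omega)/\tau(t,\omega)$ is obtained from $\phi_t(\cdot,\omega)$ by multiplication by the unimodular constant $1/\tau(t,\omega)$, it too is holomorphic in $z$ and maps $\mathbb{D}$ into $\mathbb{D}$. Hence $z\mapsto f(\Psi_t(z,\omega))$ is holomorphic on $\mathbb{D}$ for every $\omega$ and bounded by $\|f\|$ uniformly in $z$ and $\omega$. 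Moreover, since $f\in H^{\infty}(\mathbb{D})$ is in particular bounded and continuous, the preceding proposition guarantees that $T_t f$ is continuous on $\mathbb{D}$, which is exactly the hypothesis Morera's theorem requires.

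It then remains to verify that $\oint_{\partial\Delta}(T_t f)(z)\,dz=0$ for every closed triangle $\Delta\subset\mathbb{D}$. Writing this out and interchanging the contour integral with the expectation gives
\[
\oint_{\partial\Delta}(T_t f)(z)\,dz
=\oint_{\partial\Delta}\mathsf{E}\,f(\Psi_t(z,\omega))\,dz
=\mathsf{E}\oint_{\partial\Delta}f(\Psi_t(z,\omega))\,dz
=\mathsf{E}\,0=0,
\]
where the last equality is Cauchy's theorem applied, for each fixed $\omega$, to the holomorphic function $z\mapsto f(\Psi_t(z,\omega))$. Together with continuity, Morera's theorem then yields that $T_t f$ is holomorphic, and the bound $\|T_t f\|\le\|f\|$ gives $T_t f\in H^{\infty}(\mathbb{D})$.

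The crux of the argument is the interchange of $\oint_{\partial\Delta}$ and $\mathsf{E}$, that is, the application of Fubini's theorem on the product space $\partial\Delta\times\Omega$. This is justified because the integrand $f(\Psi_t(z,\omega))$ is jointly measurable---it is continuous in $z$ and $\mathcal{F}$-measurable in $\omega$, hence a Carath\'eodory function and thus jointly measurable---and it is dominated by the constant $\|f\|$, which is integrable against the finite product of arc-length measure on $\partial\Delta$ and the probability measure $P$. I expect this measurability-and-integrability bookkeeping for Fubini to be the only genuinely technical point; everything else reduces to the pointwise holomorphy of $\Psi_t(\cdot,\omega)$ already supplied by the deterministic theory.
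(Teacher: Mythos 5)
Your proof is correct and follows essentially the same route as the paper's: continuity of $T_t f$ from the preceding proposition, Morera's theorem, and a Fubini interchange of the contour integral with the expectation justified by the uniform bound $\|f\|$, with Cauchy's theorem applied pathwise to the holomorphic maps $z\mapsto f(\Psi_t(z,\omega))$. Your explicit remarks on joint measurability and on why $\Psi_t(\cdot,\omega)$ is holomorphic for each $\omega$ only make more careful what the paper leaves implicit.
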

\begin{proof}
 Let $f:\mathbb{D} \to \mathbb{C}$ be a bounded analytic function. $T_t f$ is continuous, and thus, by Morera's theorem, it suffices to show that  $\int_\Delta \mathsf{E} f(\Psi_t(z),\omega)dz = 0$ for every triangle $\Delta \subset \mathbb{D}$. But this follows directly from Fubini's theorem. Indeed, let $\gamma:[0,1] \to \mathbb{D}$ be a parameterization of the triangle $\Delta.$ Then, since
\[
\mathsf{E} \int_0^1 |f(\Psi_t(\gamma(t),\omega))\gamma'(t)|dt \leq \|f\| \cdot l(\gamma) <\infty,
\]
we conclude that

\[
 \int_\Delta \mathsf{E} f(\Psi_t(z,\omega)) dz = \mathsf{E} \int_\Delta f(\Psi_t(z,\omega)) dz = 0.
\]

\end{proof}

\subsection{Infinitesimal generator of $\Psi_t$}
The operator $A$ defined by the expression
\[
 Af(z) = \lim_{t \downarrow 0} \frac{(T_t f)(z) - f(z)}{t}, \quad z \in \mathbb{D},
\]
acting on a suitable class of functions $f:\mathbb{C} \to \mathbb{C}$ is called the \emph{infinitesimal generator} of the It\^o diffusion $\Psi_t$. The set of functions for which $Af$ is defined is denoted by $\mathcal{D}_A,$ i.~e. $f\in \mathcal{D}_A$ if the limit above exists for all $z\in \mathbb{D}.$

Following the lines of the standard proof for the case of real diffusions and applying the complex It\^o formula (see Appendix \ref{subseq:ComplexIto})  one shows that $C^2_0(\mathbb{C}) \subset \mathcal{D}_A,$ and that the infinitesimal generator in this case is given by the formula
\begin{multline*}
  A = \left(-\frac{k^2}{2}z + (z-1)^2 \tilde{p}(z)\right) \frac{\partial}{\partial z} -\frac{1}{2}k^2z^2 \frac{\partial^2}{\partial z^2} \\
+\left(-\frac{k^2}{2}\bar{z} + (\bar{z}-1)^2 \overline{\tilde{p}(z)}\right) \frac{\partial}{\partial \bar{z}} -\frac{1}{2}k^2\bar{z}^2 \frac{\partial^2}{\partial \bar{z}^2} + k^2 |z|^2 \frac{\partial^2 f}{\partial z \partial \bar{z}}.
\end{multline*}

Now, let $f$ be analytic in $\mathbb{D}$. Then, by It\^o's formula for analytic functions,

\begin{multline*}
 \mathsf{E} f(\Psi_t(z)) - f(z) \\= \mathsf{E} \int_0^t \left[ \left(-\frac{k^2}{2} \Psi_s(z) + (\Psi_s(z) - 1)^2 \tilde{p} \left(\Psi_t(z)\right )\right) \frac{\partial f}{\partial z} (\Psi_t(z)) - \frac{k^2}{2} \Psi_s^2(z) \,\frac{\partial^2 f}{\partial z^2}(\Psi_s(z))\right]dt,
\end{multline*}
and thus,
\begin{equation}
\label{eq:generator}
 A = \left(-\frac{k^2}{2}z + (z-1)^2 \tilde{p}(z)\right) \frac{\partial}{\partial z} -\frac{1}{2}k^2z^2 \frac{\partial^2}{\partial z^2}.
\end{equation}


Let $\tilde{p}(z) = \sum_{n=0}^\infty a_n z^n$ be the Taylor expansion of $\tilde{p}(z)$ in $\mathbb{D}.$ Set
\[
 \mathcal{L}_n = - z^{n+1} \frac{\partial}{\partial z}, \quad  n = 0, \pm 1, \pm 2, \ldots
\]
Observe that $\mathcal{L}_n$ give the Virasoro generators in a representation of the Virasoro algebra over the space of complex Laurent polynomials. Interesting connections between highest-weight representations of the Virasoro algebra and properties of SLE were studied, e.~g., in \cite{BB, FriedrichWerner}

Using the formula $z^2 \frac{\partial^2}{\partial z^2} = \mathcal{L}_0^2 + \mathcal{L}_0$,  we can rewrite (\ref{eq:generator}) as
\begin{equation}
\label{eq:generatorvirasoro}
 A =- \sum_{n=-1}^\infty (a_{n+1} - 2 a_n + a_{n-1}) \mathcal{L}_n  -\frac{k^2}{2} \mathcal{L}_0^2, \quad a_{-1}=a_{-2}=0.
\end{equation}
Note that operators $\mathcal{L}_n$ with $n < -1$ never appear in this expression for $A$.

We can now write  the generators for particular functions $\tilde{p}(z)$ considered in the previous section.

For $\tilde{p}(z) = \frac{1}{1-z},$ \[A = - \mathcal{L}_{-1} + \mathcal{L}_0 - \frac{k^2}{2}  \mathcal{L}_0^2.\]

For $\tilde{p}(z) = \frac{1+z}{1-z},$  \[A = - \mathcal{L}_{-1}+\mathcal{L}_1 - \frac{k^2}{2} \mathcal{L}_0^2.\]

For $\tilde{p}(z) = i,$ \[A= - i \mathcal{L}_{-1} + 2i \mathcal{L}_{0} - i \mathcal{L}_1 - \frac{k^2}{2}\mathcal{L}_0^2.\]

Since the first term in \eqref{eq:generatorvirasoro} depends linearly on $\tilde{p}(z)$ and the second term does not depend on it, it is easy to obtain the generator for $\tilde{p}(z) = A\, \frac{1+z}{1-z} + B i$:
\[
 A = - (A + B i) \,\mathcal{L}_{-1} + 2 i B \mathcal{L}_0 + (A - B i ) \,\mathcal{L}_1 - \frac{k^2}{2} \mathcal{L}_0^2.
\]

\subsection{Analytic properties of the family $T_t f$. Kolmogorov's backward equation}
Let $f$ be a bounded analytic function, $f\in H^\infty(\mathbb{D}).$ From Dynkin's formula
\[
 E[f(\Psi_t(z))] = f(z) + E\left[\int_0^t (Af)(\Psi_s(z))ds\right]
\]
it follows that the function $u(t,z) = (T_tf)(z)$ is differentiable with respect to $t$,
and
\[
 \frac{\partial }{\partial t}u(t,z) = \mathsf{E} \left[(Af)(\Psi_t(z))\right].
\]

The family of functions $(T_t f)(z)$ is a uniformly bounded family of holomorphic functions in $\mathbb{D}.$ Hence, it is normal by Montel's theorem. Since this family is pointwise continuous, it is also continuous in the topology of local uniform convergence by Vitali's theorem.

The function $u(t,z) = (T_t f)(z)$ satisfies the following initial value problem for the Kolmogorov backward equation.

\[
\begin{cases}
 \frac{\partial u}{\partial t} = \left(-\frac{k^2}{2}z + (z-1)^2 \tilde{p}(z)\right) \frac{\partial u}{\partial z} - \frac12 k^2 z^2 \frac{\partial^2 u}{\partial z^2},\\
u(0,z)= f(z),
\end{cases}
 z\in \mathbb{D}.
 \]

\subsection{Relation to the deterministic case}
Let us denote by $u(t,z)$  the mean function of $\Psi_t(z),$ which is the result of acting by $T_t$ on the identity function: $u(t,z) = (T_t \id )(z) = \mathsf{E} \Psi_t(z).$ It follows from \eqref{eq:Tsemigroup} that
\[
 u(t+s, z) = \mathsf{E} u(0, \Psi_{t+s}(z)) = \mathsf{E} u(t, \Psi_s(z)) = \mathsf{E} u(s,\Psi_t(z)).
\]
This reveals a deep similarity between the deterministic and stochastic cases. While the deterministic family $\psi_t(z)$  of holomorphic self-maps of the disk forms a bona fide semigroup, the stochastic family $\Psi_t$ forms a semigroup only in average. 

We can also look at this connection from a different point of view: to the semigroup $\psi_t(z),$ as well as to the diffusion $\Psi_t,$ one can associate a family of operators $T_t,$ which in the deterministic case are defined trivially by $(T_t f)(z) =f(\psi_t(z))$.

\subsection{Zero points of the stochastic vector field}
Analogously to the deterministic case, we can regard (\ref{eq:stochasticloewner}) as an autonomous L\"owner equation
\[
 \frac{d}{dt}\Psi_t(z,\omega) = G_0(\Psi_t(z,\omega)),
\]
where the Herglotz vector field $G_0(z,\omega)$ is given by
\[
G_0(z,\omega) = -ikz W_t(\omega)  - \frac{k^2}{2}z+  (z -1)^2 \tilde{p}(z).
\]
Here, $W_t(\omega)$ denotes a generalized stochastic process  known as  \emph{white noise} (see \cite{oksendal2003} for details).

By the argument we used in subsection \ref{subseq:suffellipticity} it follows that for $k\neq 0$ there  exists a unique point $z_0 \in \mathbb{D},$ such that $-\frac{k^2}{2}z_0 + (z_0 -1)^2 \tilde{p}(z_0).$

By the definition of the infinitesimal generator of a diffusion, $z_0$ is the unique point in the unit disk having the property
\[
 \lim_{t\to 0} \mathsf{E} \frac{\Psi_t(z) - z}{t} =  -\frac{k^2}{2}z + (z -1)^2 \tilde{p}(z).
\]

\subsection{Hierarchy of differential equations for moments}

We use the term \emph{the $n$-th moment of the process $\Psi_t(z)$}  for the function $\mu_m (z) = T_t z^m= \mathsf{E}\Psi^m_t(z).$ The first moment of $\Psi_t$ is thus, the mean function $\mathsf{E} \Psi_t(z).$
 
By applying the It\^o formula to the function $f(z) = z^m$ and taking expectations we obtain
\[
 d \mu_m =- \frac{k^2 m^2}{2} \mu_m + m \mathsf{E} \left[ \Psi^{m-1}_t (\Psi_t - 1)^2\tilde{p}(\Psi_t) \right]dt.
\]

For a function $\tilde{p}(z) = \sum_{n=0}^\infty a_n z^n$ this leads to an infinite-dimensional system of differential equations for the functions $\mu_1, \mu_2, \ldots:$

\begin{equation}
\label{eq:hierarchy}
 \frac{d}{dt} \mu_m = a_0 m \mu_{m-1} + \left(a_1 - 2 a_0 - m \,\frac{k^2}{2} \right) m \mu_m + \sum_{n=1}^\infty(a_{n-1} - 2 a_n + a_{n+1}) m \mu_{m+n}.
\end{equation}

In the simplest case when $\tilde{p}(z) = \frac{1}{1-z},$ this reduces to 

\[
 \frac{d}{dt} \mu_m =  m \mu_{m-1} - m \left(m \frac{k^2}{2}+1\right) \mu_m,
\]
and for $m = 1$ can be easily solved explicitly:

\[
 \mu_1(z) = \frac{1 - e^{-(1 +k^2/2)\,t}}{1 + k^2/2} + z e^{-(1 + k^2/2)\,t}.
\]

As a by-product, we can find the expression for the covariance of the processes $\int_0^t e^s e^{ikB_s} ds$ and $e^{-ikB_s}$ simply by comparing the formula above with the formula \eqref{eq:meanfunctionphi} for $\mathsf{E} \phi_t(z):$

\[
 \mathsf{E} e^{-i k B_t} = e^{-\frac{k^2}{2}t},
\]

\[
 \mathsf{E} \int_0^t e^s e^{-ikB_s}ds  = \begin{cases}
                                                                        e^{-t} t, \quad \quad \quad \quad k^2 = 2, \\
			      \frac{e^{-tk^2/2}-e^{-t}}{1-k^2/2},\quad \textrm{otherwise,} 
                                         \end{cases}
\]

\[
 \mathsf{E}\left( \int_0^t e^s e^{-ikB_s}ds  \cdot e^{-i k B_t}\right) =\frac{1 - e^{-(1 +k^2/2)\,t}}{1 + k^2/2},
\]

\begin{multline*}
 \mathrm{Cov} \left(\int_0^t e^s e^{ikB_s}ds, e^{-ikB_t}\right) = \mathsf{E}  \left[\int_0^t  e^{ikB_s}\,ds \cdot e^{-ikB_t }\right] - \mathsf{E} \int_0^t  e^{ikB_s}\,ds \cdot \mathsf{E} e^{-ikB_t }
\\= \begin{cases} \frac{1}{3}-\frac{1}{3} e^{-3 t} (3 t+1), \quad \quad \quad \quad \quad \quad  k^2 =2, \\ 
 \frac{1 - e^{-(1 +k^2/2)t}}{1 + k^2/2}- \frac{e^{-tk^2/2}-e^{-t}}{1-k^2/2} e^{-\frac{1}{2}tk^2}, \quad \textrm{otherwise.}
    \end{cases}
\end{multline*}

Even though it is impossible  to solve  \eqref{eq:hierarchy} explicitly for other Herglotz functions $\tilde{p}(z)$, the system still allows in many cases to express higher-order moments in terms of lower-order moments. If, for instance, $\tilde{p}(z) = \frac{1+z}{1-z},$ then the system \eqref{eq:hierarchy} is written
\[ \frac{d}{dt} \mu_m =  m \mu_{m-1} - \frac{k^2 m^2}{2} \,\mu_m - m \mu_{m+1},
\]
and hence,
\[
  \mu_{m+1} =  \mu_{m-1} - \frac{k^2 m}{2} \, \mu_m - \frac{1}{m} \frac{d}{dt} \mu_m. 
\]

\subsection{Growth estimates}
First, we write the equation \eqref{eq:randomloewner} in polar coordinates using the notation $r = |\phi_t|,$ $\theta = \arg \phi_t,$ $U = \re p,$ $V = \im p:$
\begin{equation}
\begin{cases}
\label{eq:polarLoewner}
\frac{dr}{dt} = \left[ \cos(\theta-k B_t) (1 + r^2) -2r \right] U(r,\theta) + \sin(\theta-kB_t)(1-r^2)  V(r,\theta), \\
 \frac{d\theta}{dt} = - \left(\frac{1}{r}-r\right) \sin(\theta-kB_t) U(r,\theta) + \left[\left(\frac{1}{r} + r\right) \cos (\theta - kB_t) -2\right] V(r,\theta) .
\end{cases}
\end{equation}

Let the Herglotz function again be  given by $\tilde{p}(z) =  \frac{1+z}{1-z} \re \tilde{p}(0) + i \im \tilde{p}(0).$ Then \eqref{eq:polarLoewner} becomes
\begin{equation}
\begin{cases}
\label{eq:polarautomorphisms}
\frac{dr}{dt} = (1-r^2) |\tilde{p}(0)| \cos (\theta - k B_t - \arg \tilde{p}(0)),\\
 \frac{d\theta}{dt} = -2 \im \tilde{p}(0) - \left(\frac{1}{r} + r\right) |\tilde{p}(0)| \sin(\theta - k B_t - \arg \tilde{p}(0)).
\end{cases}
\end{equation}
Let $r_0 = |\phi_0(z)|=|z|.$ Then the solution to the first equation in \eqref{eq:polarautomorphisms} is given by
\begin{equation}
\label{eq:solutionautomorphisms}
\begin{cases}
r(t) = \tanh \left(|\tilde{p}(0)|\int_0^t \cos(\theta - k B_s - \arg \tilde{p}(0)) \, ds + \arctanh r_0\right), \quad r_0 < 1,\\
r(t) \equiv 1, \quad r_0 = 1.
\end{cases}
\end{equation}
In particular, this implies that $\phi_t(z)$ maps $\mathbb{T}$ onto $\mathbb{T},$ and the function $\tilde{p}(z) = \frac{1+z}{1-z} \re \tilde{p}(0) + i \im \tilde{p}(0)$  generates  families of automorphisms of $\mathbb{D}$ in the stochastic case, too.

From \eqref{eq:solutionautomorphisms} it is easy to obtain the following estimate
\begin{equation}
\label{eq:estcayley}
 \tanh ( - |\tilde{p}(0)| \, t + \arctanh r_0) \leq r(t) \leq \tanh (|\tilde{p}(0)| \, t + \arctanh r_0).
\end{equation}

Note that the estimate from below quickly becomes trivial as $t$ increases ($\tanh$ takes negative values, but $r(t)$ always stays non-negative).

In a similar way we can obtain estimates of growth of $r(t)$ for other Herglotz functions.

For $\tilde{p}(z) = \frac{1}{1-z}, $
\begin{equation}
\label{eq:estlinear}
 r_0\, e^{-t} - (1 - e^{-t}) \leq r(t) \leq r_0 \,e^{-t} + (1 - e^{-t}).
\end{equation}

For $\tilde{p}(z) = 1,$
\begin{equation}
\label{eq:estone}
 \frac{ r_0 \,(1-t) -t}{1 +  t\,(1+r_0)}  \leq r(t) \leq  \frac{r_0 \, (1-t) + t}{1 + t \,(1-r_0)}.
\end{equation}

It can be checked that the estimates from above in  \eqref{eq:estcayley} (in the case $\tilde{p}(z) = \frac{1+z}{1-z}$), \eqref{eq:estlinear} and \eqref{eq:estone} are sharp, and the equality is attained when $k=0$.

\subsection{Boundary diffusion}

Suppose again that the family $\phi_t(z)$ is a family of disk automorphisms,  $\tilde{p}(z) = \re \tilde{p}(0)\, \frac{1+z}{1-z} + \im \tilde{p}(0)\, i.$ Since the unit circle $\mathbb{T}$ is always mapped by $\phi_t(z)$ bijectively  onto itself, we can study the stochastic family of diffeomorphisms of the unit circle described by the equation
\[
 \frac{d\theta}{dt} = -2 \im \tilde{p}(0) - 2 \, |\tilde{p}(0)| \sin(\theta - k B_t - \arg \tilde{p}(0)),
\]
which is the restriction of the second equation in \eqref{eq:polarautomorphisms} to $\mathbb{T}.$

The process $\Theta_t = \arg \Psi_t  - \arg \tilde{p}(0)= \theta - k\,B_t -\arg \tilde{p}(0)$ is a diffusion on $\mathbb{T} \cong \mathbb{R} / 2 \pi \mathbb{Z}$:
\begin{equation}
\label{eq:noisyflow}
d \Theta_t =  -2 \, \left(\im \tilde{p}(0) + \, |\tilde{p}(0)| \sin \Theta_t\right) dt - k dB_t,
\end{equation}
with the infinitesimal generator given by
\[
 A = -2 \, \left(\im \tilde{p}(0) + \, |\tilde{p}(0)| \sin \theta\right) \frac{\partial}{\partial \theta }  +\frac{k^2}{2}\, dB_t. 
\]
The functions annihilating the infinitesimal generator, that is, the solutions to the equation $Af = 0$ are given by the formula
\[
 f(\theta) = c_1 + c_2 \int_0^\theta \exp \frac{4 \left(s \im \tilde{p}(0)- |\tilde{p}(0)| \cos s\right)}{k^2} ds
\]

The function $\tilde{p}(z) = \frac{1+z}{1-z}$ corresponds to the noisy North-South flow
\begin{equation}
\label{eq:noisycayleyflow}
d \Theta_t =  -2  \sin \Theta_t dt - k dB_t,
\end{equation}
studied in \cite{Carverhill86, Carverhill85} (see also \cite{RogersWilliams, Siegert}).

\appendix
\section{Complex It\^{o} formula and integration by parts}
\label{subseq:ComplexIto}
We recall  two basic results of stochastic calculus in the complex plane.

Let $Z_t = (X^1_t,Y^1_t,\ldots, X^n_t, Y^n_t)$ be a $2n$-dimensional real It\^o process, which may also be regarded as an $n$-dimensional complex It\^o process, by identifying $Z^k_t = X^k_t + i Y^k_t$. Let 
\[
f(z_1,\ldots, z_n) = (u_1(z_1,\ldots, z_n), v_1(z_1,\ldots, z_n), \ldots,u_m(z_1,\ldots, z_n), v_m(z_1,\ldots, z_n))
\]
be a $C^2$ function from $\mathbb{R}^{2n}$ to $\mathbb{R}^{2m}.$ Then it follows from the real multidimensional It\^{o} formula that $W_t = f(Z_t)$ is a $2m$-dimensional It\^o process, and its $p$-th (complex) component is given by
\begin{multline}
\label{eq:multiito}
dW_p = d(\re W_p + i \cdot \im W_p) = \sum_{j=1}^n \frac{\partial u_p}{\partial x_j} \, dX_j + \sum_{j=1}^n \frac{\partial u_p}{\partial y_j} \, dY_j \\ + \frac12 \sum_{j,k}\frac{\partial^2 u_p}{\partial x_j \partial x_k} \, dX_j \cdot dX_k + \sum_{j,k} \frac{\partial^2 u_p}{\partial x_j \partial y_k} dX_j \cdot dY_k + \frac12 \sum_{j,k}\frac{\partial^2 u_p}{\partial y_j \partial y_k} \, dY_j \cdot dY_k 
\\ + i \left(   \sum_{j=1}^n \frac{\partial v_p}{\partial x_j} \, dX_j + \sum_{j=1}^n \frac{\partial v_p}{\partial y_j} \, dY_j \right. 
\\ + \left. \frac12 \sum_{j,k}\frac{\partial^2 v_p}{\partial x_j \partial x_k} \, dX_j \cdot dX_k + \sum_{j,k} \frac{\partial^2 v_p}{\partial x_j \partial y_k} \, dX_j \cdot dY_k + \frac12 \sum_{j,k}\frac{\partial^2 v_p}{\partial y_j \partial y_k} \, dY_j \cdot dY_k  \right),
\end{multline}
where the dot product of stochastic differentials is a linear operation computed according to the following formal rules $dB_i \cdot dB_j =\delta _{ij} dt,$ $dB_i \cdot dt = dt \cdot dB_i = 0.$ 

 If we use the notations $\frac{\partial f}{\partial z_k} = \frac12 \left(\frac{\partial f}{\partial x_k} - i \frac{\partial f}{\partial y_k}\right)$ and $\frac{\partial f}{\partial \bar{z}_k} = \frac12 \left(\frac{\partial f}{\partial x_k} + i \frac{\partial f}{\partial y_k}\right),$ then \eqref{eq:multiito} simplifies to the well-known complex It\^{o} formula (see, e.~g., \cite{Uboe87}):

\begin{multline}
\label{eq:complexito}
 dW_p = \sum_{j=1}^n \frac{\partial f_p}{\partial z_j} \,dZ_j +   \sum_{j=1}^n \frac{\partial f_p}{\partial \bar{z}_j} \, d\bar{Z}_j 
\\ + \frac12 \sum_{j,k}^n \frac{\partial^2 f_p}{\partial z_i \partial z_j}\, dZ_i  \cdot dZ_j+ \frac12 \sum_{j,k}^n \frac{\partial^2 f_p}{\partial \bar{z}_i \partial \bar{z}_j}\, d\bar{Z}_i \cdot d\bar{Z}_j + \sum_{j,k}^n \frac{\partial^2 f_p}{\partial z_i \partial \bar{z}_j}\, dZ_i \cdot d\bar{Z}_j.
\end{multline}

Let $Z_t,$ $W_t$ be two It\^o processes in the complex plane. Take $f(z,w) = z \cdot w.$ Since all terms containing $\frac{\partial }{\partial \bar{z}}$ and $\frac{\partial }{\partial \bar{w}}$  vanish, we obtain the integration by parts formula for complex It\^o processes:

\[
 d(Z_t W_t) = W_t d Z_t + Z_t d W_t + dZ_t \cdot d W_t.
\]

In general, if the process $Z_t$ takes values in a domain $D \subset \mathbb{C}^n$  where the function $f$ is holomorphic, the formula \eqref{eq:complexito} reduces to
\[
 dW_p = \sum_{j=1}^n \frac{\partial f_p}{\partial z_j} \,dZ_j  + \frac12 \sum_{j,k}^n \frac{\partial^2 f_p}{\partial z_j \partial z_k}\, dZ_j \cdot dZ_k.
\]

\bibliographystyle{plain}

\end{document}